\renewcommand{\mathbb}{\mathbbm}                     
\renewcommand{\epsilon}{\varepsilon}                 
\renewcommand{\phi}{\varphi}
\renewcommand{\le}{\leqslant}
\renewcommand{\ge}{\geqslant}
\renewcommand{\leq}{\le}
\renewcommand{\geq}{\ge}
\DeclareMathOperator{\C}{{\mathbb  C}}                
\DeclareMathOperator{\R}{{\mathbb R}}                
\DeclareMathOperator{\N}{{\mathbb N}}                
\DeclareMathOperator{\Borel}{{\mathfrak B}}
\newcommand{\scapro}[2]{\langle #1,#2\rangle}       
\numberwithin{equation}{section}
\theoremstyle{plain}
\newtheorem{thm}{\protect\theoremname}[section]		
\theoremstyle{definition}
\theoremstyle{plain}
\theoremstyle{definition}
\newtheorem{example}[thm]{Example}
\newtheorem{lemma}[thm]{Lemma}
\newtheorem{remark}[thm]{Remark}
 \providecommand{\definitionname}{Definition}
\providecommand{\theoremname}{Theorem}	
\newcommand{\ud}{\,\mathrm{d}}
\newcommand{\udd}{\mathrm{d}}
\newcommand{\1}{\mathbbm{1}}
\newcommand\numberthis{\addtocounter{equation}{1}\tag{\theequation}}
\newcommand{\itemEq}[1]{%
	\begingroup%
   	\setlength{\abovedisplayskip}{0pt}%
	\setlength{\belowdisplayskip}{0pt}%
	\parbox[c]{\linewidth}{\begin{flalign}#1&&\end{flalign}}%
	\endgroup}
	\newcommand{\beq}{\begin{equation}}
	\newcommand{\beql}[1]{\begin{equation}\label{#1}}
	\newcommand{\eeq}{\end{equation}}
\title{Invariant measure for the stochastic Cauchy problem driven by a cylindrical L\'evy process}
\author{Umesh Kumar \\Department of Mathematics\\
Rajdhani College (University of Delhi)\\
New Delhi  110015 \\ India\\ \\
umesh.kumar@rajdhani.du.ac.in
 \and Markus Riedle \\Department of Mathematics \\ King's College  \\ London WC2R 2LS\\ United Kingdom\\ \\ markus.riedle@kcl.ac.uk}
\date{5 April 2019}
\begin{document}
\maketitle
\begin{abstract}
In this work, we present sufficient conditions for the existence of a stationary solution of  an abstract stochastic Cauchy problem driven by an arbitrary cylindrical L\'evy process,  and show that these conditions are also necessary if the semigroup is stable, in which case the invariant measure is unique. For typical situations such as the heat equation, we significantly simplify these conditions without assuming any further restrictions on the driving cylindrical L\'evy process and demonstrate their application in some examples.

	\end{abstract}
{\bf AMS 2010 Subject Classification:}  60G10, 60G20, 60G51,  60H05, 60H15 \\
{\bf Keywords and Phrases:} cylindrical L\'evy processes, Cauchy problem, invariant measures, stationary distributions, Mehler semigroup
\section{Introduction}
Cylindrical L\'evy processes naturally extend the class of cylindrical Brownian motions and cover many examples of L\'evy-type noise considered in the literature. A general framework of cylindrical L\'evy
processes in Banach spaces has been recently introduced by Applebaum and Riedle in \cite{app}. Stochastic integration of deterministic operator-valued integrands with respect to cylindrical L\'evy processes is developed in \cite{OU}. Based on this integration theory, the authors of the present article have developed a general theory of weak and  mild solutions for  the stochastic Cauchy problem driven by an arbitrary cylindrical L\'evy process in \cite{kumar_riedle}. 

More specifically, the stochastic Cauchy problem is a linear evolution equation driven by an additive noise of the form
\begin{equation}\label{SCP10}
\ud Y(t)= AY(t)\ud t+B\ud L(t)  \qquad  \text{for all $t \in [0,T]$.} 
\end{equation}
Here, $L$ is a cylindrical L\'evy process on a separable Hilbert space $U$,
the coefficient $A$ is the generator of a strongly continuous semigroup $(T(t))_{t\ge 0}$ on a separable Hilbert space $V$ and $B$  is a linear, bounded operator from $U$ to $V$. In this general setting, we 
present sufficient conditions for the existence of a stationary solution of \eqref{SCP10} and show that these conditions are also necessary if the semigroup is stable, in which case the invariant measure is unique. If the semigroup has a spectral decomposition, we significantly simplify these conditions without assuming any further restrictions on the driving cylindrical L\'evy process. 

For finite dimensional L\'evy processes the existence of invariant measures and its relation to operator self-decomposibilty has been studied by  Jurek \cite{jurek},  Jurek and Vervaat  \cite{jurek_vervaat},  Sato and Yamatado \cite{sato_yamazato_1}, \cite{sato_yamazato_2},  Wolfe \cite{wolfe}, and Zabczyk \cite{zabczyk_stationary}.
The case of an infinite dimensional L\'evy process in a Hilbert space was studied by Chojnowska-Michalik in \cite{michalik} and \cite{michalik_heat}. To the best of our knowledge, the case of a cylindrical L\'evy process was only considered for a specific example of a cylindrical L\'evy process
and under further assumptions on the semigroup in  \cite{priola_zabczyk_cyl}. The assumptions in \cite{priola_zabczyk_cyl} enable the authors to reduce the problem of the existence of an invariant measure to the analogue problem in one dimension. The general setting in the present paper clearly excludes this approach. Our results in the general framework can easily be applied to the example considered in  \cite{priola_zabczyk_cyl}, and we are not only able to cover these results but even improve them; see Example \ref{ex.priola}.

In our general framework, having in hand the integration theory developed in \cite{OU} and the probabilistic description of cylindrical L\'evy processes by their characteristics introduced in \cite{Riedle_infinitely}, we are able to generalise the conditions from the case of a genuine L\'evy process in \cite{michalik} to the cylindrical setting. The fact, that cylindrical processes are generalised processes not attaining values in the underlying Hilbert space, prevents us from directly adopting the methods from the classical case. Instead, we exploit some of the methods developed in \cite{kumar_riedle} and \cite{OU} such as tightness of finite-dimensional approximations. As in the classical setting, the derived conditions are rather difficult to verify in the general case but can be significantly simplified in typical cases such as the heat equation; see \cite{michalik_heat} for the classical case. Again, the fact that cylindrical processes are generalised processes not attaining values in the underlying Hilbert space requires some more advanced arguments. 

Our article begins with Section \ref{se.preliniaries} where we fix most of our notations and introduce cylindrical L\'evy processes and their stochastic integral. In section \ref{se.invariant} we briefly demonstrate the equivalence of 
the existence of a stationary solution of \eqref{SCP10} and of an invariant measure for the corresponding Mehler semigroup. Our first main result of this article is presented in this section, which provides sufficient conditions for the existence of a stationary solution in terms of the characteristics of the driving cylindrical L\'evy process. Our second main result is in the final Section \ref{se.stable_case}, where  we significantly simplify the conditions from Section \ref{se.invariant} if the semigroup has a spectral decomposition. We finish the article by demonstrating our results in some examples.

\section{Preliminaries}\label{se.preliniaries}
Let $U$  and $V$ be  real separable Hilbert spaces with norms $\|\cdot\|$ and inner products $\scapro{\cdot}{\cdot}$. Let  $(e_k)_{k\in \mathbb{N}}$ and $(h_k)_{k\in \mathbb{N}}$ be the orthonormal bases of $U$ and $V$, respectively. We identify the dual of a Hilbert space by the space itself. The space of all linear, bounded operators from $U$ to $V$ is 
denoted by $\mathcal{L}(U,V)$, equipped with the operator norm $\|\cdot \|_{\text{op}}$. By $B_U$, we denote the open unit ball in $U$, that is, $B_U:=\{u\in U: \|u\| < 1\}$. The Borel $\sigma$-algebra of $U$ is denoted by $\Borel(U)$ and the
 space of Radon probability measures on $\Borel(U)$ is denoted by 
${\mathcal M}(U)$ and is equipped with the Prokhorov metric.

  We fix a filtered probability space $(\Omega, \mathcal{F}, \{\mathcal{F}_t\}_{t\ge 0}, P)$, where the filtration $\{\mathcal{F}_t\}_{t\ge 0}$ satisfies the usual conditions of right continuity and completeness.
 By $L^0_P(\Omega; U)$,  we denote 
the space of all equivalence classes of measurable functions $ g\colon \Omega \rightarrow U$ and it is equipped with the topology of convergence in probability.  The space of all regulated functions $g \colon [0, T] \to U$
is denoted by $R([0,T];U)$ and it is a Banach space when equipped 
with the supremum norm. Recall that a function $g \colon [0, T] \to U$ is called \emph{regulated} if it can be uniformly approximated by step functions. In particular, a regulated function has only countable number of discontinuities; see \cite[Ch.II.1.3]{bourbaki} for this and other properties 
we will use. 

Let $\Gamma$ be a subset of $U$. The sets of the form 
\[ C(u_1, ... , u_n; B) :=\{ u \in U: (\langle u, u_1 \rangle, ... , \langle u, u_n \rangle) \in B\},\]
for $u_1, ... , u_n \in \Gamma$ and $B\in \Borel(\mathbb{R}^n)$ are called {\em cylindrical sets with respect to $\Gamma$}. The set of all these cylindrical sets is denoted by $\mathcal{Z}(U,\Gamma)$ and it is a $\sigma$-algebra if $\Gamma$ is finite and otherwise an algebra. We write $\mathcal{Z}(U)$ for $\mathcal{Z}(U,U)$.  A function $\mu \colon \mathcal{Z}(U) \to [0,1]$ is called a \emph{cylindrical measure}, if for each finite subset $\Gamma \subseteq U$ the restriction of $\mu$ on the $\sigma$-algebra $\mathcal{Z}(U, \Gamma)$ is a                                                                                                                                                                                                                                                                                                                                                                                                                                                                                                                                                                                                                                    measure. A cylindrical measure $\mu$ is only finitely additive and is said to extend to a measure $\nu$ on $\Borel (U)$ if $\mu = \nu$ on $\mathcal{Z}(U)$.  A cylindrical measure is called finite if $\mu (U) < \infty$ and a cylindrical probability measure if $\mu(U) =1$.
A \emph{cylindrical random variable} $Z$ in $U$ is defined as a linear and continuous map $ Z\colon U \rightarrow L_P^0(\Omega; \mathbb{R})$.
Given a cylindrical random variable $Z$, we can  define a cylindrical probability measure $\lambda$ by 
\begin{align*}
\lambda\colon  \mathcal{Z}(U) \to [0,1],\qquad
\lambda(Z)=P\big( (Zu_1,\dots, Zu_n)\in B\big)
\end{align*}
for cylindrical sets $Z=C(u_1, ... , u_n; B)$. The cylindrical probability measure $\lambda$ is called the {\em cylindrical distribution} of $Z$. The characteristic function of a cylindrical random variable $Z$ is defined by
\[\phi_{Z}\colon U \rightarrow \mathbb{C}, \qquad \phi_{Z}(u)=E[\exp (iZu)],\]
and it uniquely determines the cylindrical distribution of $Z$.

A family $(Z(t):\, t\ge 0)$ of cylindrical random variables is called 
a {\em cylindrical process}. By a \emph{cylindrical L\'evy process}  we mean a cylindrical process $(L(t):\, t\ge 0)$ such that for all $u_1, ... , u_n \in U$ and $n\in \mathbb{N}$, the stochastic process
$((L(t)u_1, ... , L(t)u_n): t \ge 0)$
is a L\'evy process in $\mathbb{R}^n$ with respect to the filtration $\{\mathcal{F}_t\}_{t\ge 0}$. The characteristic function of $L(t)$ for all $t \ge 0$ is given by
\[\phi_{L(t)} \colon U \rightarrow \mathbb{C}, \qquad \phi_{L(t)}(u)=\exp\big(t\Psi(u)\big),\]
where $\Psi \colon U \rightarrow \mathbb{C}$ is called the (cylindrical) symbol of $L$, and is given by
\begin{align}\label{cyl_levy_symbol}
\Psi(u) = ia(u) - \frac{1}{2}\langle Qu, u\rangle +\int_U\left(e^{i\langle u, h \rangle}-1-i\langle u, h \rangle \1_{B_{\mathbb{R}}}(\langle u, h \rangle)\right)\mu (\udd h),
\end{align}
where $a \colon U \rightarrow \mathbb{R}$ is a continuous mapping with $a(0)=0$,
the mapping $Q \colon U \rightarrow U$ is a positive, symmetric operator  and $\mu$ is a cylindrical L\'evy measure on $\mathcal{Z}(U)$, that is it is a cylindrical measure on $\mathcal{Z}(U)$ satisfying
\[\int_U \left( \langle u, h \rangle^2 \wedge 1 \right) \mu(\udd h) < \infty \qquad \mathrm{for\;all\;}u \in U.\]
We call $(a, Q, \mu)$ the \emph{(cylindrical) characteristics of $L$}. Cylindrical L\'evy processes are introduced in \cite{app} and its characteristics further studied in \cite{Riedle_infinitely}. 

For a function $f\colon [0,T] \to \mathcal{L}(U,V)$ such that the map $f^*(\cdot)v\colon [0,T]\to U$ is a regulated function for each $v\in V$, one can define
the stochastic integral
\[Z_A(v):=\int_0^T\1_A(s)f^*(s)v \ud L(s)\]
for each set $A\in\Borel([0,T])$. In this way, one obtains a cylindrical random variable $Z_A \colon V \to L^0_P(\Omega;\R)$. The function $f$ is called \emph{stochastically integrable} with respect to $L$ if for each Borel set $A \in \Borel([0,T])$, the cylindrical random variable $Z_A$ extends to a genuine $V$-valued random variable $I_A$, that is 
\begin{align}
\scapro{I_A}{v}=\int_0^T\1_A(s)f^*(s)v \ud L(s)
\qquad \text{for all }v\in V. \label{eq.def.stoch_int}
\end{align}
This stochastic integration theory is developed in \cite{OU} and applied in  \cite{kumar_riedle} to study the weak solution of abstract stochastic Cauchy problem driven by a cylindrical L\'evy process. 

\section{Invariant measure}\label{se.invariant}
The main aim of this paper is to study the conditions for the existence of an invariant measure  for the solution  of the stochastic Cauchy problem
\begin{equation}\label{SCP1}
\begin{split}
\ud Y(t)&= AY(t)\ud t+B\ud L(t)  \qquad  \mathrm{for\; all}\;  t\ge 0, \\
Y(0)& =  Y_0,
\end{split}
\end{equation}
where $A$ is the generator of a $C_0$-semigroup $(T(t))_{t\geq 0}$
on a separable Hilbert space $V$, the driving noise  $L$ is a cylindrical L\'evy process on a separable Hilbert space $U$ and $B \colon U \rightarrow V$  is a bounded linear operator from $U$ to $V$. The initial condition $Y_0$ is a $V$-valued $\mathcal{F}_0$-measurable random variable. 

A $V$-valued process $(Y(t):t\in[0,T])$ is called a weak solution of \eqref{SCP1} on $[0,T]$ if it satisfies the following:
\begin{enumerate}[(1)]
	\item $Y$ is progressively measurable;
	\item  the mapping
	$t\mapsto \scapro{Y(t)}{g(t)}$ is integrable on $[0,T]$ for each $g\in C([0,T];V)$ and satisfies for each sequence $(g_n)_{n\in \mathbb{N}} \subseteq C([0,T];V)$ with $\|g_n \|_{\infty} \rightarrow 0$ that 
	\[ \int_0^T \langle Y(s),g_{n}(s)\rangle \ud s \rightarrow 0 \qquad \text{in probability as }n \to\infty;\]
	\item for every $v \in \mathcal{D}(A^*)$ and $t\in [0,T]$, $P$-almost surely, we have
	\begin{align}\label{eq.def-weak-eq}
	\langle Y(t), v\rangle = \langle Y_0,v\rangle +\int_0^t \langle Y(s),A^*v\rangle \ud s+L(t)(B^*v).
	\end{align}
\end{enumerate} 
Theorem 4.3 in \cite{kumar_riedle} shows that there exists a weak solution 
of the stochastic Cauchy problem \eqref{SCP1} on an interval $[0,T]$ if and only if
the map $s \to T(s)B$ is stochastically integrable with respect to $L$ on $[0,T]$, in which case the solution is unique. Together with Lemma~\ref{le.whole-line-solution} below it follows that in this case the solution exists on each interval $[0,S]$ and is given by
\begin{align}\label{OUP}
Y(t) = T(t)Y_0 + \int_0^tT(t-s)B \ud L(s) \qquad \text{ for all $t\ge 0$}.
\end{align}
It remains to establish the following: 
\begin{lemma} \label{le.whole-line-solution}
If there exists a weak solution for the stochastic Cauchy problem \eqref{SCP1} on $[0,T]$ for some $T>0$, then there exists  a weak solution  on $[0,S]$
for any $S>0$. 
\end{lemma}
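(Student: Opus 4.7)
The plan is to use the characterisation from Theorem~4.3 of \cite{kumar_riedle}: existence of a weak solution on $[0,T]$ is equivalent to the map $s\mapsto T(s)B$ being stochastically integrable with respect to $L$ on $[0,T]$ in the sense of \eqref{eq.def.stoch_int}. Thus it suffices to prove: if $s \mapsto T(s)B$ is stochastically integrable on $[0,T]$, then it is stochastically integrable on $[0,S]$ for every $S>0$. For $S\le T$ this is immediate, as any $A\in \Borel([0,S])$ also belongs to $\Borel([0,T])$ and so the cylindrical random variable $Z_A$ already extends to a $V$-valued random variable. By a trivial induction in steps of length $T$, it remains to handle the case $S=2T$.

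For $S=2T$ the strategy is to split $[0,2T]=[0,T]\cup (T,2T]$ and show integrability on the right half by transporting the integrability on $[0,T]$ via a time shift. Define the shifted process
\[
\tilde L(t):=L(t+T)-L(T), \qquad t\ge 0,
\]
which is a cylindrical L\'evy process adapted to $\tilde{\mathcal F}_t:=\mathcal F_{t+T}$ and has the same cylindrical characteristics $(a,Q,\mu)$ as $L$. Since the construction of the stochastic integral in \cite{OU} and the existence of the extension in \eqref{eq.def.stoch_int} are expressed purely through the characteristic function of the driving process, stochastic integrability of $s\mapsto T(s)B$ on $[0,T]$ transfers from $L$ to $\tilde L$; denote the resulting $V$-valued integral by $\tilde J$.

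Using the semigroup identity $T(s+T)=T(T)T(s)$, for every $v\in V$ one has
\begin{align*}
\int_0^T \bigl(T(s+T)B\bigr)^* v\,\ud \tilde L(s)
= \int_0^T \bigl(T(s)B\bigr)^*\bigl(T(T)^* v\bigr)\,\ud \tilde L(s)
= \scapro{\tilde J}{T(T)^* v}
= \scapro{T(T)\tilde J}{v}.
\end{align*}
A change-of-variables identifies the left-hand side with the cylindrical random variable $Z_{(T,2T]}v=\int_T^{2T}(T(s)B)^* v\,\ud L(s)$, so $Z_{(T,2T]}$ extends to the $V$-valued random variable $T(T)\tilde J$. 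Splitting an arbitrary $A\in \Borel([0,2T])$ as $A=(A\cap [0,T])\cup(A\cap (T,2T])$ and adding the two $V$-valued extensions now yields stochastic integrability on $[0,2T]$ by linearity of the cylindrical integral, and iterating in steps of $T$ gives it on every $[0,S]$.

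The main obstacle is the transfer step from $L$ to $\tilde L$: one needs the property \textquotedblleft$Z_A$ extends to a $V$-valued random variable\textquotedblright\ to depend only on the cylindrical characteristics of the driving cylindrical L\'evy process. This should be extractable from the Riemann-type construction of the integral in \cite{OU}, where the cylindrical distribution of the approximating sums—and hence of their limit $Z_A$—is determined by the characteristics; if a direct reference is unavailable, one can instead verify it by computing the characteristic function of $Z_A v$ explicitly from \eqref{cyl_levy_symbol} and observing that it coincides for $L$ and $\tilde L$.
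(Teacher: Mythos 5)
Your argument is correct in outline and rests on the same two pillars as the paper's proof --- the semigroup identity $T(s+T)=T(T)T(s)$ and the temporal homogeneity of $L$ --- but it executes them at the level of random variables rather than distributions. The paper never shifts the driving process: it computes the characteristic function of $Zv=\int_0^S B^*T^*(s)v\,\ud L(s)$ via \cite[Lemma 5.4]{OU}, splits the exponent $\int_0^S\Psi(B^*T^*(s)v)\,\ud s$ into $M$ pieces of length $S/M\le T$, shifts each piece back to $[0,S/M]$ using the semigroup property, recognises the resulting product as the characteristic function of the genuine Radon measure $\lambda_0*\cdots*\lambda_{M-1}$ with $\lambda_i=\theta\circ T(iS/M)^{-1}$, and concludes by Theorem IV.2.5 of \cite{vakhania} that $Z$ extends. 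Your pathwise version buys an explicit formula for the extension on each subinterval (namely $T(T)\tilde J$), but costs two extra justifications: (i) the transfer of integrability from $L$ to the shifted process $\tilde L$, and (ii) the change of variables $\int_T^{2T}\cdots\,\ud L(s)=\int_0^T\cdots\,\ud\tilde L(r)$ inside the cylindrical integral. You correctly flag (i) as the crux; closing it requires exactly the ingredient the paper uses, namely that a cylindrical random variable extends to a genuine $V$-valued one if and only if its characteristic function is that of a Radon measure (Theorem IV.2.5 in \cite{vakhania}), combined with the fact that $\phi_{Z_A}(v)=\exp\left(\int_A\Psi(B^*T^*(s)v)\,\ud s\right)$ depends on $L$ only through its symbol. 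Once (i) is secured this way, (ii) holds for step integrands by inspection and for regulated integrands by passing to the limit, and you also need to observe that your argument applies not only to $A=(T,2T]$ but to every Borel subset of $(T,2T]$, since the definition of stochastic integrability demands an extension of $Z_A$ for each Borel set $A$. With these points filled in your proof closes; the paper's route is somewhat more economical because it invokes the extension theorem only once and never manipulates the stochastic integral itself.
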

\begin{proof} 
Choose $M \in \N$ such that $S/M \le T$ and define the cylindrical random variable
	\[Z \colon V \rightarrow L_P^0(\Omega; \mathbb{R}), \qquad Zv :=\int_0^SB^*T^*(s)v\ud L(s).\]
By \cite[Lemma 5.4]{OU} and the semigroup property,  we obtain for each $v\in V$ that
	\begin{align*}
	\phi_{Z}(v) &=  \exp \left(\int_0^S\Psi(B^*T^*(s)v)\ud s\right)\\
	&= \prod_{i=0}^{M-1}\exp \left(\int_{\frac{iS}{M}}^{\frac{(i+1)S}{M}}\Psi(B^*T^*(s)v)\ud s\right)\\
	&= \prod_{i=0}^{M-1}\exp \left(\int_{0}^{\frac{S}{M}}\Psi\left(B^*T^*\left(s+\frac{iS}{M}\right)v\right)\udd s\right)\\
	&= \prod_{i=0}^{M-1}\exp \left(\int_{0}^{\frac{S}{M}}\Psi\left(B^*T^*(s)T^*\left(\frac{iS}{M}\right)v\right)\udd s\right). \numberthis \label{ch_fn_ZA}
	\end{align*}
On the other hand, stochastic integrability of the map $s\mapsto T(s)B$ in $[0,T]$ implies that there exists a genuine probability distribution $\theta$
with characteristic function
	\begin{align} \label{ch_fn_nui}
	\phi_{\theta}(v)= \exp\left(\int_{0}^{\frac{S}{M}}\Psi\left(B^*T^*(s)v\right)\ud s\right).
	\end{align}	
If for each $i \in \{0, \ldots , M-1\}$, the image measure $\theta \circ T\left(\tfrac{iS}{M}\right)^{-1}$ is denoted by $\lambda_{i}$  and $\lambda:= \lambda_0 * \cdots * \lambda_{M-1}$, then it follows from \eqref{ch_fn_ZA} and \eqref{ch_fn_nui} that
	\[  \phi_{\lambda}(v) 
	=\prod_{i=0}^{M-1} \phi_{\theta}\left(T^\ast\left(\tfrac{iS}{M}\right)v\right)
	=
	\phi_{Z}(v) \qquad \text{for all  } v \in V.\]
Theorem IV.2.5 in \cite{vakhania} implies that $Z$ is induced by a genuine $V$-valued random variable.
	Hence $s \mapsto T(s)B$ is stochastically integrable in $[0,S]$ which completes the proof by Theorem 4.3 in \cite{kumar_riedle}.
\end{proof}

In the rest of this article we assume that the map  $s \mapsto T(s)B$ is stochastically integrable with respect to $L$  in $[0,T]$ for some (and hence each) $T >0$. In this case $\int_0^t T(s)B\, \ud L(s)$ is an infinitely divisible, $V$-valued 
random variable and we define  
\begin{align*}
\nu_t:=\mathscr{L}\left(\int_0^t T(s)B \ud L(s) \right)
\qquad\text{for all }t\ge 0.
\end{align*}
If $(a,Q,\mu)$ denotes the cylindrical characteristics of $L$, then the usual characteristics $(c_t,S_t,\xi_t)$ of $\nu_t$ is given by
\begin{align}
\label{ct_defn}
\langle c_t,v\rangle &=\int_0^t a(B^*T^*(s)v)\ud s +\int_V\langle h, v\rangle \big(\1_{B_V}(h)-\1_{B_{\mathbb{R}}}(\langle h, v\rangle)\big)\,\xi_t(\udd h),\\
\langle v, S_{t}v\rangle & = \int \limits_{0}^{t} \langle B^*T^*(s)v, QB^*T^*(s)v\rangle\ud s, \\
\xi_t & = ( \text{leb}\otimes\mu )\circ \chi_t^{-1} \quad \text{on } \mathcal{Z}(V), 
\label{xit_defn}
\end{align}
where
$\chi_t \colon [0,\infty)\times U \rightarrow V$ is defined by $\chi_t(s,u):=\1_{[0,t]}(s)T(s)Bu$.

A probability measure $\nu$ on $\Borel(V)$  is called a \emph{stationary measure}  for the process $(Y(t): t\geq 0)$ defined in \eqref{OUP} if it satisfies
	\begin{align}\label{de.stationary_measure}
	\nu = T_t\nu *\nu_t \qquad \text{for all  }t \ge 0,
	\end{align}
where $T_t\nu$ denotes the forward measure $\nu\circ (T(t))^{-1}$. 
Equivalently, a measure satisfying \eqref{de.stationary_measure} is also called an \emph{operator self-decomposable measure}. 

A stationary measure can also be defined as the invariant measure for the generalised Mehler semigroup  of the process $Y$. The concept of a generalised Mehler semigroup has been studied in detail in \cite{rockner_bogachev} for the Gaussian case and \cite{rockner_fuhrman} for the non-Gaussian case.                                                                                                                                                                                                                                                                                                                                                                                                                                                                                                                                                                                                                                                                                                                                                                                                                                                                                                                                                                                                                                                                                                                                                                                                                                                                                                                                                                                                                                                                                                                                                                                                                                                                                                                                                                                                                                                                                                                                                                                                                                                                                                                                                                                                                                                                                                                                                                                                                                                                                                                                                                                                                                                                                                                                                                                                                                                                                                                                                                                                                                                                                                                                                                                                                                                                                            
First, we need to know that the family $(\nu_t:t\ge 0)$ defines a skew-convolution semi-group:
\begin{lemma}\label{le.skew_conv}
The family  $(\nu_t:t\ge0)$ of probability measures on $\Borel(V)$ satisfies 
	\begin{align}\label{eq.skew_conv}
	\nu_{t+s} = T_t\nu_s *\nu_t \qquad \text{   for all  } s, t \ge 0.
	\end{align}
\end{lemma}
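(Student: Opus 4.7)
The plan is to verify the identity \eqref{eq.skew_conv} by comparing characteristic functions, exploiting the explicit formula for $\phi_{\nu_t}$ coming from Lemma 5.4 of \cite{OU} together with the semigroup property of $(T(t))_{t\ge 0}$. Since $\nu_t$ is a Radon probability measure on $V$ and Fourier transforms separate such measures, it suffices to show that $\phi_{\nu_{t+s}}(v) = \phi_{T_t\nu_s * \nu_t}(v)$ for every $v\in V$.

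First I would record that, as already used in the proof of Lemma~\ref{le.whole-line-solution},
\[
\phi_{\nu_r}(v) = \exp\left(\int_0^r \Psi\bigl(B^*T^*(\sigma)v\bigr)\ud \sigma\right)
\qquad \text{for all } r\ge 0,\ v\in V.
\]
Applied with $r=t+s$, this gives the characteristic function of the left-hand side of \eqref{eq.skew_conv}. Splitting the integral as $\int_0^{t+s} = \int_0^t + \int_t^{t+s}$ will produce the two factors we expect on the right.

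Next I would compute the characteristic function of $T_t\nu_s * \nu_t$. By definition of convolution and of the push-forward,
\[
\phi_{T_t\nu_s * \nu_t}(v) = \phi_{\nu_s}\bigl(T^*(t)v\bigr)\,\phi_{\nu_t}(v)
= \exp\left(\int_0^s \Psi\bigl(B^*T^*(\sigma)T^*(t)v\bigr)\ud \sigma\right)\exp\left(\int_0^t \Psi\bigl(B^*T^*(\sigma)v\bigr)\ud \sigma\right).
\]
Using the semigroup identity $T^*(\sigma)T^*(t) = T^*(\sigma+t)$ and the substitution $r=\sigma+t$ in the first integral converts it to $\int_t^{t+s}\Psi(B^*T^*(r)v)\ud r$, so the product equals $\exp\bigl(\int_0^{t+s}\Psi(B^*T^*(r)v)\ud r\bigr)=\phi_{\nu_{t+s}}(v)$.

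The argument is essentially a bookkeeping exercise; there is no real obstacle once one has the characteristic-function formula for $\nu_r$ from \cite{OU}. The only subtlety worth being explicit about is that $T_t\nu_s$ is indeed a Radon probability measure on $V$ (which is immediate, as it is the image of a Radon measure under the continuous map $T(t)$), so its convolution with $\nu_t$ is well defined, and the equality of characteristic functions yields equality of the measures themselves.
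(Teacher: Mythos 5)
Your proposal is correct and follows essentially the same route as the paper's proof: both compute $\phi_{T_t\nu_s*\nu_t}(v)=\phi_{\nu_s}(T^*(t)v)\,\phi_{\nu_t}(v)$ using the characteristic-function formula from \cite[Lemma 5.4]{OU}, then apply the semigroup identity and a change of variables to merge the two integrals into $\int_0^{t+s}\Psi(B^*T^*(r)v)\ud r=\log\phi_{\nu_{t+s}}(v)$. Your added remark that characteristic functions determine Radon measures on $V$ is a harmless explicit justification of the final step that the paper leaves implicit.
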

\begin{proof} Let $\phi_{T_t\nu_s*\nu_t}\colon V \to \C$ denotes the characteristic function of the probability measure $T_t\nu_s*\nu_t$. For each $v\in V$ and $s,t\ge 0$, we obtain, 
	\begin{align*}
	\phi_{T_t\nu_s*\nu_t}(v)
	&= \phi_{\nu_s}(T^*(t)v)\phi_{\nu_t}(v)\\
	& = \exp\left(\int_0^s\Psi\left(B^*T^*(r +t)v\right)\ud r +\int_0^t\Psi\left(B^*T^*(r)v\right)\ud r\right)\\
	& = \exp\left(\int_0^{t+s}\Psi\left(B^*T^*(r)v\right)\ud r \right)\\
	& = \phi_{\nu_{t+s}}(v),
	\end{align*}
	which establishes \eqref{eq.skew_conv}.
\end{proof}
The generalised Mehler semigroup $(P_t:t\ge 0)$ for the family $(\nu_t:t\ge 0)$ is defined by 
\[P_t\colon B_b(V) \to B_b(V), \qquad P_tf(v) = \int_Vf\big(T(t)v+h\big)\,\nu_t(\udd h),\]
where $B_b(V)$ denotes the space of all bounded and Borel measurable functions on $V$.
The generalised Mehler semigroup is a semigroup by \cite[Prop.\ 2.2]{rockner_bogachev}  because $(\nu_t:t\ge 0)$ is a skew-convolution semigroup by Lemma \ref{le.skew_conv}. A measure $\nu$ is called an {\em invariant measure for the transition semigroup $(P_t: t\ge 0)$} if for all $f \in B_b(V)$ and $t\ge 0$,
\begin{align}
\int_VP_tf(v)\,\nu(\udd v)=\int_Vf(v)\,\nu(\udd v).
\end{align}
The following equivalence result is from \cite[Theorem 2.1]{app_infinitesimal}, whose proof identically applies to the cylindrical case. 
\begin{thm} The following are equivalent for a measure $\nu$ on $\Borel(V)$:
\begin{enumerate}[{\rm(a)}]
		\item $\nu$ is a stationary measure for the process \eqref{OUP}, i.e.\ it satisfies \eqref{de.stationary_measure};
		\item $\nu$ is an invariant measure for the generalised Mehler semigroup $(P_t: t\ge 0)$. 
		\item if $Y_0$ has probability distribution $\nu$ then the  process $(Y(t):t\ge 0)$ defined in \eqref{OUP} is strictly stationary.
	\end{enumerate}
\end{thm}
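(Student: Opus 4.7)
The plan is to establish the chain (a) $\Leftrightarrow$ (b) $\Rightarrow$ (c) $\Rightarrow$ (a), with the first equivalence reducing to a direct computation and the remaining two implications built around the Markov property of $(Y(t))$.

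For (a) $\Leftrightarrow$ (b), I would apply Fubini's theorem to the defining formula of $P_t$: for every $f\in B_b(V)$ and $t\ge 0$,
$$\int_V P_tf(v)\, \nu(\ud v) = \int_V\int_V f\bigl(T(t)v+h\bigr)\,\nu_t(\ud h)\,\nu(\ud v) = \int_V f(w)\,(T_t\nu \ast \nu_t)(\ud w).$$
Since bounded measurable test functions determine the measure, the identity $\int P_tf\,\ud\nu = \int f\,\ud\nu$ holding for all $f\in B_b(V)$ and all $t\ge 0$ is equivalent to $T_t\nu\ast\nu_t=\nu$, which is precisely \eqref{de.stationary_measure}.

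For (a) $\Rightarrow$ (c), assume $Y_0$ has distribution $\nu$ and is independent of $L$. Splitting the stochastic integral in \eqref{OUP} into its restrictions to $[0,t]$ and $[t,t+s]$ gives, for $s,t\ge 0$,
$$Y(t+s)= T(s)Y(t) + \int_t^{t+s} T(t+s-r)B\,\ud L(r),$$
and a characteristic-function calculation analogous to that of Lemma \ref{le.skew_conv} shows that the increment integral is independent of $\mathcal{F}_t$ with distribution $\nu_s$. The same technique, combined with the substitution $u=t-r$ at the level of cylindrical symbols using \cite[Lemma 5.4]{OU}, shows $\int_0^t T(t-s)B\,\ud L(s)$ has distribution $\nu_t$ as well. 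Consequently $(Y(t))$ is a time-homogeneous Markov process with transition semigroup $(P_t)$ and one-dimensional marginal $T_t\nu\ast\nu_t=\nu$ at every time $t$. Strict stationarity then follows from the classical fact that a Markov process whose initial distribution is invariant under the transition semigroup has time-invariant finite-dimensional distributions. The reverse implication (c) $\Rightarrow$ (a) is immediate: strict stationarity gives $\mathscr{L}(Y(t))=\nu$ for all $t\ge 0$, and the computation just performed identifies this distribution with $T_t\nu\ast\nu_t$.

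The main obstacle I anticipate is carrying out the time-shift and time-reversal manipulations of the stochastic integral rigorously at the cylindrical level: since $L$ does not take values in $U$, manipulations such as the change of variable $u=t-r$ and the splitting of the integral at time $t$ cannot be done pathwise and must instead be justified through characteristic-function identities, using the stationary and independent increments of each one-dimensional projection $(L(t)u)_{t\ge 0}$ and the integration framework of \cite{OU}. Once these are in place, however, the argument follows the template of \cite[Theorem 2.1]{app_infinitesimal} unchanged.
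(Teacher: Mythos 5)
Your argument is correct and is essentially the one the paper relies on: the paper simply cites \cite[Theorem 2.1]{app_infinitesimal} and asserts that its proof carries over verbatim to the cylindrical setting, and your Fubini computation for (a)$\Leftrightarrow$(b) together with the Markov/increment-splitting argument for (a)$\Rightarrow$(c)$\Rightarrow$(a) is precisely that proof, with the cylindrical subtleties correctly identified as matters to be handled at the level of characteristic functions via the symbol $\Psi$, in the same way as in Lemma~\ref{le.skew_conv}. No gaps.
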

\begin{proof}
See Theorem 2.1 in \cite{app_infinitesimal}. 
\end{proof}

A natural candidate for a stationary measure is the limit of $\nu_t$ in ${\mathcal M}(V)$ as $t\to\infty$. The following result relates the limit to the 
stochastic integral:
\begin{lemma}\label {eqv} The following conditions are equivalent:
	\begin{enumerate}[{\rm (a)}]
		\item $(\nu_t:\, t\ge 0)$  converges in ${\mathcal M}(V)$  as $t \to\infty$;
		\item $\left(\int_0^t T(s)B \ud L(s):\, t\ge 0\right)$ converges 
		in $L_P^0(\Omega;V)$ as $t \to\infty$.
	\end{enumerate}
In this case, the probability distribution of the limit in (b) coincides with the limit in (a). 
\end{lemma}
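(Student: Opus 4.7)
The direction (b) $\Rightarrow$ (a), along with the identification of the two limits, is immediate: convergence in $L^0_P(\Omega;V)$ implies convergence in distribution, and each $\nu_t$ is by definition the law of $X_t:=\int_0^t T(s)B\,\ud L(s)$, so $(\nu_t)$ converges in $\mathcal{M}(V)$ to the law of the $L^0_P$-limit. For the converse let $\nu:=\lim_{t\to\infty}\nu_t$; my plan is to show $(X_t)_{t\geq 0}$ is Cauchy in probability by proving that $\eta_{s,t}:=\mathscr{L}(X_t-X_s)\to\delta_0$ weakly as $s,t\to\infty$, since convergence in distribution to a constant implies convergence in probability.

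First, the independent-stationary increments of $L$ combined with the semigroup property give, for $0\leq s\leq t$, the decomposition $X_t-X_s=T(s)\int_0^{t-s}T(u)B\,\ud L_s(u)$ with $L_s(\cdot):=L(s+\cdot)-L(s)$ cylindrically distributed as $L$ and independent of $\mathcal{F}_s$. Hence $\eta_{s,t}=T_s\nu_{t-s}$ and $\nu_t=\nu_s*\eta_{s,t}$ (the latter also following from Lemma \ref{le.skew_conv}). Weak convergence of $(\nu_t)$ gives tightness by Prokhorov's theorem: for any $\epsilon>0$, picking a compact $K\subseteq V$ with $\nu_t(K)\geq 1-\epsilon/2$ for all $t$, a union bound gives $\eta_{s,t}(K-K)\geq 1-\epsilon$, and since $K-K$ is compact in $V$, the family $(\eta_{s,t})_{0\leq s\leq t}$ is tight.

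The crux is to identify subsequential limits. For any $s_n,t_n\to\infty$ with $s_n\leq t_n$, tightness yields a subsequence along which $\eta_{s_n,t_n}\to\mu$ weakly for some $\mu\in\mathcal{M}(V)$; passing to the limit in $\nu_{t_n}=\nu_{s_n}*\eta_{s_n,t_n}$ gives $\nu=\nu*\mu$. I would then argue $\mu=\delta_0$ as follows: since $\hat{\nu}$ is continuous with $\hat{\nu}(0)=1$, there is an open neighbourhood $U$ of $0$ on which $\hat{\nu}$ does not vanish, whence $\hat{\nu}\hat{\mu}=\hat{\nu}$ yields $\hat{\mu}\equiv 1$ on $U$. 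Picking a countable dense subset of $U$ and using continuity of $v\mapsto\langle v,Y\rangle$ (for $Y\sim\mu$), connectedness of $U$ and discreteness of $2\pi\mathbb{Z}$, one deduces $\langle v,Y\rangle=0$ for all $v\in V$ almost surely, i.e.\ $\mu=\delta_0$. Every subsequential limit being $\delta_0$ and the family being tight then force $\eta_{s,t}\to\delta_0$; hence $X_t-X_s\to 0$ in probability, $(X_t)$ is Cauchy in $L^0_P(\Omega;V)$, and its limit $X$ has law $\nu$. The main obstacle is this last characteristic-function argument; a slicker (though less elementary) alternative is to note that weak limits of infinitely divisible distributions on a separable Hilbert space are infinitely divisible and hence have nowhere-vanishing characteristic functions, whereupon $\hat{\mu}\equiv 1$ follows directly on the whole of $V$.
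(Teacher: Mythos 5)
Your argument is correct, but it takes a genuinely different route from the paper. The paper's proof is two lines: it observes that the process $\bigl(\int_0^t T(s)B \ud L(s) : t\ge 0\bigr)$ has independent increments and then invokes Lemma A.2.1 of \cite{jurek_vervaat}, which states precisely that for such processes convergence in probability and convergence in law coincide. What you have done is supply a self-contained proof of that cited lemma in the present setting: the decomposition $\eta_{s,t}=T_s\nu_{t-s}$ with $\nu_t=\nu_s*\eta_{s,t}$ (which is exactly the skew-convolution identity of Lemma \ref{le.skew_conv}), tightness of the increments via the $K-K$ union bound, identification of subsequential limits through $\nu=\nu*\mu$, and the conclusion $\mu=\delta_0$. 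Your ``slicker alternative'' for the last step is in fact the one most consonant with the paper: since each $\nu_t$ (hence $\eta_{s,t}$ and any weak limit) is infinitely divisible, $\phi_\nu$ vanishes nowhere and $\phi_\mu\equiv 1$ on all of $V$ follows at once --- this is the same non-vanishing argument the paper deploys in Lemma \ref{stationarym}. Your elementary argument via equicontinuity, connectedness and discreteness of $2\pi\mathbb{Z}$ also works and avoids quoting that weak limits of infinitely divisible laws are infinitely divisible. One small point of hygiene: you claim tightness of $(\nu_t)_{t\ge 0}$ over \emph{all} $t$, which strictly speaking requires continuity of $t\mapsto\nu_t$ on compact time intervals; but since the Cauchy criterion only involves $s,t\to\infty$, it suffices to extract, for given sequences $s_n,t_n\to\infty$, a single compact $K$ working for the two convergent sequences $(\nu_{s_n})$ and $(\nu_{t_n})$, so nothing is lost. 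In short: the paper buys brevity by outsourcing the equivalence to a reference, while your version buys a self-contained proof at the cost of about a page; both are valid.
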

\begin{proof} 
Note that  the process $\big(\int_0^tT(s)B\ud L(s):\, t \geq 0\big)$ has independent increments which follows from the definition of the stochastic integral as a limit of stochastic integrals of simple integrands. Consequently, Lemma A.2.1 in   \cite{jurek_vervaat} guarantees that convergence in probability and weak convergence coincide.
\end{proof}

\begin{lemma} \label{stationarym} If $(\nu_t:\, t\ge 0)$  converges to $\nu$ in ${\mathcal M}(V)$ as $t \rightarrow \infty$, then it follows that:  
\begin{enumerate}[{\rm (a)}]
	\item the limit $\nu$ is a stationary measure for the process \eqref{OUP};
	\item any stationary measure $\lambda$ for \eqref{OUP} has the form $\lambda =\beta*\nu$, where $\beta$ is a probability measure satisfying  $\beta = T_t\beta$ for all $t\geq 0$. 
	\end{enumerate}
\end{lemma}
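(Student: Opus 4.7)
For part (a), the plan is to pass to the limit in the skew-convolution identity of Lemma~\ref{le.skew_conv}. Fix $t\ge 0$ and rewrite $\nu_{s+t}=T_t\nu_s\ast \nu_t$; as $s\to\infty$ the left-hand side converges to $\nu$ in $\mathcal{M}(V)$ by hypothesis. On the right-hand side, since $T(t)\colon V\to V$ is continuous, the push-forward map $\mu\mapsto T_t\mu$ is continuous on $\mathcal{M}(V)$, so $T_t\nu_s \to T_t\nu$. Continuity of convolution on $\mathcal{M}(V)$ then yields $T_t\nu_s\ast\nu_t \to T_t\nu\ast\nu_t$, and the identity $\nu=T_t\nu\ast\nu_t$ is exactly the stationarity condition \eqref{de.stationary_measure}.

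For part (b), let $\lambda$ be a stationary measure, so $\lambda=T_s\lambda\ast\nu_s$ for every $s\ge 0$, and write $\beta_s:=T_s\lambda$. The first step is to show that the family $\{\beta_s:s\ge 0\}$ is relatively compact in $\mathcal{M}(V)$. This is where the main difficulty lies: in infinite dimensions boundedness of characteristic functions does not give tightness, and one must use that $\nu_s$ (and hence its limit $\nu$) is infinitely divisible, so its characteristic function never vanishes. Combined with $\nu_s\to\nu$ and the identity $\lambda=\beta_s\ast\nu_s$, this allows one to invoke the standard ``shift-compactness'' result for convolution equations on separable Hilbert spaces (see e.g.\ the analogous step in \cite{jurek_vervaat} or \cite{michalik}) to conclude that $\{\beta_s\}$ is tight. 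Extract a subsequence $\beta_{s_n}\to\beta$ weakly; passing to the limit in $\lambda=\beta_{s_n}\ast\nu_{s_n}$ using continuity of convolution gives the desired factorisation $\lambda=\beta\ast\nu$.

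It remains to verify that $T_s\beta=\beta$ for every $s\ge 0$. From $\lambda=T_s\lambda\ast\nu_s$ and $\lambda=\beta\ast\nu$, together with the stationarity $\nu=T_s\nu\ast\nu_s$ proved in part (a), one computes
\[
\beta\ast\nu = \lambda = T_s\lambda\ast\nu_s = T_s(\beta\ast\nu)\ast\nu_s = T_s\beta\ast T_s\nu\ast\nu_s = T_s\beta\ast\nu.
\]
Passing to characteristic functions this becomes $\phi_{\beta}(v)\phi_{\nu}(v)=\phi_{T_s\beta}(v)\phi_{\nu}(v)$ for all $v\in V$. Since $\nu$ is infinitely divisible (being a weak limit of the infinitely divisible $\nu_t$), its characteristic function $\phi_\nu$ is nowhere zero, so we may divide to obtain $\phi_{\beta}=\phi_{T_s\beta}$, and hence $\beta=T_s\beta$. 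This completes the decomposition claimed in (b).
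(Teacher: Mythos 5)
Your proof is correct and follows essentially the same route as the paper: for (a) you pass to the limit in the skew-convolution identity of Lemma~\ref{le.skew_conv}, and for (b) you use shift-compactness of the factors $T_s\lambda$ in $\lambda = T_s\lambda * \nu_s$ together with the non-vanishing of $\phi_\nu$ (infinite divisibility) to divide characteristic functions and conclude $\beta = T_s\beta$. The only, harmless, difference is that you extract a convergent subsequence of $(T_s\lambda)$, whereas the paper shows the whole family converges by computing $\phi_{T_{t_n}\lambda}=\phi_\lambda/\phi_{\nu_{t_n}}\to\phi_\lambda/\phi_\nu$ along an arbitrary sequence; your subsequence limit already suffices for the statement, since the final identity $\beta*\nu=T_s\beta*\nu$ forces $\beta=T_s\beta$.
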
                                                                                                                                    

\begin{proof}
 Lemma \ref{le.skew_conv} guarantees $\nu_{t+s} = T_t\nu_s *\nu_t$ for any $s,t \ge 0$. By taking limit as $s \to \infty$, we obtain
	\[\nu =T_t\nu*\nu_t \qquad \text{for all  }t \ge 0,\]
	which proves (a). To establish (b), we follow the arguments in \cite[Prop. 3.2]{michalik}. Let $\lambda$ be an invariant measure for \eqref{OUP} and  $(t_n)_{n\in\N}\subseteq \R^{+}$ a sequence converging to $\infty$. By the definition of the invariant measure,  we have
	\begin{align}\label{eq.lambda_is_invariant}
	\lambda = T_{t_n}\lambda*\nu_{t_n} \qquad \text{ for all  } n \in \N.
	\end{align} 
Since $(\nu_{t_n}:\, n\in\N)$ is relatively compact in $\mathcal{M}(V)$, and $\{\lambda\}$ is trivially relatively compact,  Theorem III.2.1 in \cite{partha} guarantees that the sequence $(T_{t_n}\lambda: n \in \N)$ is relatively compact in $\mathcal{M}(V)$. As a consequence of infinite divisibility of distributions $\nu$ and $\nu_t$, we obtain $\phi_{\nu}(v)\neq 0$ and $\phi_{\nu_t}(v) \neq 0$ for all $v\in V$. It follows by \eqref{eq.lambda_is_invariant} that, 
	\[ \phi_{T_{t_n}\lambda}(v) = \frac{\phi_{\lambda}(v)}{\phi_{{\nu}_{t_n}}(v)}\rightarrow \frac{\phi_{\lambda}(v)}{\phi_{\nu}(v)} \qquad \text{as  } n \to \infty.\]
Hence, since  $(t_n)_{n\in\N}$ is an arbitrary sequence,  Lemma VI.2.1 in  \cite{partha} implies that  $(T_t\lambda:\, t\ge 0)$ converges weakly to some probability measure $\beta$, and thus we obtain $\lambda = \beta *\nu$ by \eqref{eq.lambda_is_invariant}. Using that both $\lambda$ and $\nu$ are stationary measures for \eqref{OUP}, we conclude
	\[ \beta * \nu = \lambda = T_t\lambda*\nu_t = T_t(\beta*\nu)*\nu_t  = T_t\beta * (T_t\nu *\nu_t)=T_t\beta*\nu.\]
	Consequently, $\phi_{\beta}(v)\phi_{\nu}(v) = \phi_{T_t\beta}(v)\phi_{\nu}(v)$ for all $v\in V.$ Since $\phi_{\nu}(v) \neq 0$ for all $v\in V$, we derive $\phi_{\beta}(v)= \phi_{T_t\beta}(v)$ implying $\beta = T_t \beta$.
\end{proof}         
   
By Lemma \ref{stationarym}, if the  sequence $(\nu_t:\, t\ge 0)$ converges in ${\mathcal M}(V)$ then its limit is a stationary measure. Thus,  conditions for the  convergence of $(\nu_t:\, t\ge 0)$ provide conditions for the existence of a stationary measure.   Later in the case of stable semigroups, we will see that the converse implication is also true, i.e.\ the existence of a stationary measure  implies  convergence of $(\nu_t:\, t\ge 0)$. 

\begin{thm} \label{condiff}
The sequence $(\nu_t:\, t\ge 0)$ converges in ${\mathcal M}(V)$ as $t\to\infty$ 
if and only if the characteristics of $\nu_t$ defined in \eqref{ct_defn} - \eqref{xit_defn} 
satisfy the following conditions: 
	\begin{enumerate}[(a)]
\item[{\rm (a)}] \itemEq{\text{The limit of $c_t$ exists in $V$ as $t\to\infty$  where 
		$c_t$ is defined in \eqref{ct_defn}};
		\label{condition_cinfty_exist}}
\item[{\rm (b)}] \itemEq{\int_0^{\infty} \mathrm{tr} \big[T(s)BQB^*T^*(s)\big]\ud s <\infty; \label{finite_trace_condition}}
\item[{\rm (c)}]  \itemEq{ \label{condition2_levy_measure_inv_iff}	 \sup_{n\geq 1} \int_0^{\infty}\int_U\left( \sum_{k=1}^n \langle u, B^*T^*(s)h_k\rangle^2 \wedge 1 \right) \mu (\udd u)\ud s < \infty;}
\item[{\rm (d)}]  \itemEq{ \label{condition_levy_measure_inv_iff}	\limsup_{m\to\infty} \sup_{n\geq m} \int_0^{\infty}\int_U\left( \sum_{k=m}^n \langle u, B^*T^*(s)h_k\rangle^2 \wedge 1 \right) \mu (\udd u)\ud s=0.}
	\end{enumerate}
\end{thm}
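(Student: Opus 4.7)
The strategy is to treat conditions (a)--(d) as the cylindrical analogue of the classical convergence criterion for infinitely divisible (ID) distributions on a Hilbert space, expressed through their L\'evy--Khintchine triples. Since $\nu_t$ is ID on $V$ with genuine characteristics $(c_t, S_t, \xi_t)$ as given in \eqref{ct_defn}--\eqref{xit_defn}, the proof reduces to matching (a)--(d) with convergence of the three components of the triple, together with the limit being a valid ID triple on $V$. Concretely, (a) corresponds to convergence of the shifts, (b) to the existence of a trace-class limit $S$ of $(S_t)$, and (c)--(d) together to convergence of the L\'evy measures $\xi_t$ to a L\'evy measure $\xi$ on $V$.

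For the sufficiency direction, under (b) and the identity $\mathrm{tr}(S_t) = \int_0^t \mathrm{tr}[T(s) B Q B^* T^*(s)]\,\udd s$, the operator $S_t$ converges in trace norm to a positive trace-class operator $S$. For the L\'evy part, let $P_n$ denote the orthogonal projection onto $\mathrm{span}(h_1,\dots,h_n)$; the pushforward relation \eqref{xit_defn} gives
\[
\int_V \bigl(\|P_n h\|^2 \wedge 1\bigr)\,\xi_t(\udd h) = \int_0^t \int_U \Bigl(\sum_{k=1}^n \scapro{u}{B^* T^*(s) h_k}^2 \wedge 1\Bigr)\,\mu(\udd u)\,\udd s.
\]
Condition (c) then yields uniform boundedness of these quantities in $t$ and $n$, while (d) provides the uniform control over the tail in both time and dimension. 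Together they allow me to construct a L\'evy measure $\xi$ on $V$ as the appropriate limit of the finite-dimensional marginals $\xi_t \circ P_n^{-1}$. With $(c, S, \xi)$ in hand and (a) providing the shift, pointwise convergence $\phi_{\nu_t}(v) \to \phi_\nu(v)$ follows by Lebesgue-type arguments applied to each term of the L\'evy--Khintchine exponent, and combining with the tightness supplied by the uniform bounds yields $\nu_t \to \nu$ in $\mathcal{M}(V)$ via Lemma~\ref{eqv}.

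For the converse, assuming $\nu_t \to \nu$ in $\mathcal{M}(V)$, the limit $\nu$ is itself ID with some triple $(c, S, \xi)$, and standard convergence results for ID measures on Hilbert spaces give $c_t \to c$, $\mathrm{tr}(S_t) \to \mathrm{tr}(S) < \infty$, and convergence of $\xi_t$ to $\xi$ with $\int_V (\|h\|^2 \wedge 1)\,\xi(\udd h) < \infty$. The first is (a); the second, combined with the monotonicity of $\mathrm{tr}(S_t)$ in $t$, yields (b); and the last, together with the pushforward identity above and monotone convergence in $n$, translates into (c), with (d) encoding the uniform tightness of the $\xi_t$ across dimensions.

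The principal technical difficulty is that $\mu$ is only a cylindrical measure on $U$, so the map $u \mapsto \|T(s) B u\|^2$ is not directly $\mu$-integrable. This forces (c) and (d) to be phrased through the finite-dimensional truncations $\sum_{k=1}^n \scapro{u}{B^* T^*(s) h_k}^2$, which \emph{are} cylindrical in $u$ and thus $\mu$-integrable. The heart of the argument is therefore to justify interchanging the limit in $n$ with the integrals against $\mu$ and Lebesgue measure, and to show that (d) is precisely the uniform tightness-in-dimension needed to prevent loss of mass when passing between the projected marginals $\xi_t \circ P_n^{-1}$, the full L\'evy measure $\xi_t$ on $V$, and its limit $\xi$.
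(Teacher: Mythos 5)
Your overall strategy coincides with the paper's: both reduce the theorem to the convergence and compactness criteria for infinitely divisible laws on a Hilbert space (Theorem VI.5.3 and Lemma VI.2.1 in \cite{partha}), with (a) governing the shifts, (b) the Gaussian part, and (c)--(d) the L\'evy measures; your necessity argument is essentially the paper's. The genuine gap is in the step you yourself flag as the heart of the matter: you assert that (c) and (d) ``allow me to construct a L\'evy measure $\xi$ on $V$ as the appropriate limit of the finite-dimensional marginals $\xi_t\circ P_n^{-1}$,'' but passing from the cylindrical set function $\xi_\infty=(\mathrm{leb}\otimes\mu)\circ\chi_\infty^{-1}$ to a $\sigma$-additive Radon L\'evy measure on $\Borel(V)$ is not a routine limit of marginals --- a cylindrical measure always has consistent finite-dimensional marginals, and the real issue is countable additivity, which requires a tightness argument that your sketch does not supply. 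The paper devotes Lemma~\ref{le.levy_ext_eqv} to exactly this: it symmetrizes and projects to obtain genuine L\'evy measures $\rho_N=(\xi_\infty+\xi_\infty^-)\circ\pi_N^{-1}$, forms the associated infinitely divisible laws $\theta_N$, proves relative compactness of the family $\{\theta_N\}$ by a Gaussian-averaging computation that converts condition (d) into the uniform smallness required by Lemma VI.2.3 in \cite{partha} together with an equicontinuity estimate at the origin, identifies the limit via Lemma~\ref{le.eta_continuity_pi_n}, and finally invokes a domination theorem (Theorem 3.4 in \cite{OU}) to pass from $\xi_\infty+\xi_\infty^-$ back to $\xi_\infty$. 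This construction is also what makes the tightness half of your argument work: the relative compactness of $(\xi_t)$ away from the origin is deduced from $\xi_t\le\xi_\infty$ and the Radon property of the \emph{extended} measure $\xi_\infty$, so it cannot be obtained before the extension is established.

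A second, related omission: your ``Lebesgue-type arguments'' for interchanging the limit in $n$ with the integral against $\mu$ cannot proceed by dominated convergence in $u$ directly, because $\mu$ is only finitely additive on $\mathcal{Z}(U)$; the paper instead pushes everything to $\R$ via the one-dimensional image measures $(|\beta|^2\wedge 1)\,(\mu\circ\scapro{\cdot}{u_n}^{-1})$ and uses their weak convergence (Lemma 4.4 of \cite{Riedle_infinitely}), followed by Fatou's lemma and dominated convergence in the $s$-variable only. Without these two ingredients the sufficiency direction does not close; the remainder of your outline (trace-norm convergence of $S_t$ from (b), pointwise convergence of characteristic functions, and the necessity direction) is sound and matches the paper.
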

For the proof of Theorem \ref{condiff} we need some results on the 
cylindrical measure $\xi_\infty$ defined by 
\begin{align}\label{eq.def-eta}
\xi_\infty:=( \text{leb}\otimes \mu ) \circ \chi_{\infty}^{-1}
\colon  \mathcal{Z}(V)\to [0,\infty], 
\end{align}
where $\chi_{\infty}\colon [0,\infty) \times U \to V$ is defined by $\chi_{\infty}(s,u):= T(s)Bu$. The canonical projection is denoted by $\pi_n$, i.e.\
\begin{align*}
\pi_n\colon V\to\ V, \qquad \pi_n(v)=\sum_{k=1}^n \scapro{v}{h_k}h_k, 
\end{align*}
where $(h_k)_{k\in\N}$ is an orthonormal basis of $V$. 

\begin{lemma}\label{eta_is_cyl_levy} If \eqref{condition2_levy_measure_inv_iff} holds, then it follows that
\begin{align*}
	\int_V\left(\scapro{h}{v}^2 \wedge 1\right) \,\xi_\infty(\udd h)<\infty
\qquad\text{for all }v\in V.
\end{align*}
\end{lemma}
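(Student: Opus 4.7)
The plan is to reduce the integral against the cylindrical measure $\xi_\infty$ to an integral against $\mathrm{leb}\otimes\mu$, and then dominate the resulting integrand by a quantity directly controlled by hypothesis \eqref{condition2_levy_measure_inv_iff}. Since $h\mapsto \scapro{h}{v}^2\wedge 1$ is a cylindrical function on $V$ (depending only on the one-dimensional projection onto $v$), the definition $\xi_\infty=(\mathrm{leb}\otimes\mu)\circ\chi_\infty^{-1}$ together with a change of variables gives
\begin{align*}
\int_V\left(\scapro{h}{v}^2\wedge 1\right)\xi_\infty(\udd h)
=\int_0^\infty\int_U\left(\scapro{u}{B^*T^*(s)v}^2\wedge 1\right)\mu(\udd u)\ud s,
\end{align*}
with the right-hand side well-defined as a cylindrical integral in $u$ for each $s\ge 0$ and as an ordinary integral in $s$ thereafter.

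The next step is to expand $v=\sum_{k=1}^\infty \scapro{v}{h_k}h_k$ and use Cauchy--Schwarz together with Parseval to get
\begin{align*}
\scapro{u}{B^*T^*(s)v}^2
=\scapro{T(s)Bu}{v}^2
\le \|v\|^2 \sum_{k=1}^\infty \scapro{u}{B^*T^*(s)h_k}^2.
\end{align*}
The elementary inequality ``$a\le cb$ and $c\ge 1$ imply $a\wedge 1\le c(b\wedge 1)$'' then gives, with $C:=\max(\|v\|^2,1)$ and noting that the $\wedge 1$ commutes with the monotone limit in $n$,
\begin{align*}
\scapro{u}{B^*T^*(s)v}^2\wedge 1
\le C\sup_{n\in\N}\left(\sum_{k=1}^n\scapro{u}{B^*T^*(s)h_k}^2\wedge 1\right).
\end{align*}

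Finally, the sequence $\sum_{k=1}^n\scapro{u}{B^*T^*(s)h_k}^2\wedge 1$ is non-decreasing in $n$, so monotone convergence on $[0,\infty)\times U$, followed by hypothesis \eqref{condition2_levy_measure_inv_iff}, yields
\begin{align*}
\int_0^\infty\int_U \sup_{n}\left(\sum_{k=1}^n\scapro{u}{B^*T^*(s)h_k}^2\wedge 1\right)\mu(\udd u)\ud s
=\sup_n\int_0^\infty\int_U\left(\sum_{k=1}^n\scapro{u}{B^*T^*(s)h_k}^2\wedge 1\right)\mu(\udd u)\ud s,
\end{align*}
which is finite. Combined with the pointwise bound above, this proves the claim.

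The main obstacle I expect is the careful handling of integrals against the cylindrical measure $\mu$ (and the derived cylindrical measure $\xi_\infty$): one must verify that each integrand encountered is a cylindrical function of $u$, or an increasing limit of such, so that both the change-of-variables formula in the first step and the monotone convergence argument in the last step are rigorously justified in a setting where $\mu$ is only finitely additive on $\mathcal{Z}(U)$. Once these foundational points are cleared, the proof reduces to Cauchy--Schwarz and the truncation inequality.
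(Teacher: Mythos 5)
Your overall strategy (reduce to an integral against $\mathrm{leb}\otimes\mu$, apply Cauchy--Schwarz, and invoke \eqref{condition2_levy_measure_inv_iff}) matches the paper's, but the decisive step is not justified, and you have correctly located the trouble without resolving it. The interchange $\int_U \sup_n\bigl(\sum_{k=1}^n\scapro{u}{B^*T^*(s)h_k}^2\wedge 1\bigr)\,\mu(\udd u)=\sup_n\int_U\bigl(\sum_{k=1}^n\scapro{u}{B^*T^*(s)h_k}^2\wedge 1\bigr)\,\mu(\udd u)$ is an application of the monotone convergence theorem to $\mu$, which is only a finitely additive set function on the algebra $\mathcal{Z}(U)$; monotone convergence fails for such objects, and the limiting integrand $\|T(s)Bu\|^2\wedge 1$ is not a cylindrical function of $u$, so its integral against $\mu$ is not even defined. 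Being ``an increasing limit of cylindrical functions'' does not help here. The same objection applies to your pointwise domination step: the bound $\scapro{u}{B^*T^*(s)v}^2\wedge 1\le C\sup_n(\cdots)$ compares a cylindrical function with a non-cylindrical one, so it cannot be integrated against $\mu$. Note also that if your interchange were valid, condition \eqref{condition2_levy_measure_inv_iff} alone would already yield $\int_V(\|h\|^2\wedge 1)\,\xi_\infty(\udd h)<\infty$; the paper needs the additional condition \eqref{condition_levy_measure_inv_iff} in Lemma \ref{le.levy_ext_eqv} precisely because this interchange is unavailable in the cylindrical setting.

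The paper avoids the problem by never forming the infinite sum inside the $U$-integral. It applies Cauchy--Schwarz to $\scapro{T(s)Bu}{\pi_n v}=\sum_{k=1}^n\scapro{T(s)Bu}{h_k}\scapro{v}{h_k}$, which yields a pointwise inequality between two cylindrical functions of $u$ and hence a legitimate comparison of cylindrical integrals, giving $\sup_n\int_0^\infty\int_U\left(\scapro{T(s)Bu}{\pi_n v}^2\wedge 1\right)\mu(\udd u)\ud s<\infty$. It then passes from $\pi_n v$ to $v$ in the inner integral using Lemma 4.4 of \cite{Riedle_infinitely} (weak convergence of the genuine finite measures $(|\beta|^2\wedge 1)\,(\mu\circ\scapro{\cdot}{u_n}^{-1})$ on $\R$ when $u_n\to u$), and finally applies Fatou's lemma only in the $s$-variable, where the underlying measure is Lebesgue measure. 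You would need to replace your last two steps by an argument of this kind.
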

\begin{proof}
For  any $v \in V$ and $n \in \N$ we obtain by the Cauchy-Schwarz inequality that
	\begin{align*}
& \int_0^{\infty}\int_U\left(  \scapro{T(s)Bu}{\pi_n(v)}^2 \wedge 1 \right)\, \mu (\udd u)\ud s\\
&\qquad= \int_0^{\infty}\int_U\left( \left( \sum_{k=1}^n \scapro{T(s)Bu}{h_k}\scapro{v}{h_k}\right)^2
\wedge 1\right)\,\mu (\udd u)\ud s\\
&\qquad  \le \max\{1,\|v\|^2\}\int_0^{\infty}\int_U\left( \sum_{k=1}^n \langle u, B^*T^*(s)h_k\rangle^2 \wedge 1 \right) \mu (\udd u)\ud s.
	\end{align*}
Assumption \eqref{condition2_levy_measure_inv_iff} implies  	\begin{align}\label{eq3.cyl_levy_pf}
	\sup_{n\ge 1} \int_0^{\infty}\int_U\left(  \scapro{T(s)Bu}{\pi_n(v)}^2 \wedge 1 \right)\, \mu (\udd u)\ud s< \infty. 
\end{align}
	Since for any sequence $(u_n)_{n\in\N}\subseteq U$ satisfying $u_n \to u$ in $U$, the finite measures $(|\beta|^2 \wedge 1)\,(\mu \circ \scapro{\cdot}{u_n}^{-1})$ converge weakly to $(|\beta|^2 \wedge 1)\, (\mu \circ \scapro{\cdot}{u}^{-1})$ according to \cite[Lemma 4.4]{Riedle_infinitely}, it follows that
	\begin{align*}
	&\lim_{n\to\infty}\int_U\left(  \scapro{T(s)Bu}{\pi_n(v)}^2 \wedge 1 \right)\,  \mu (\udd u)\\
	& \qquad =\lim_{n\to\infty} \int_{\R}\left(|\beta|^2 \wedge 1 \right)\, \left(\mu \circ \scapro{\cdot}{B^*T^*(s)\pi_n(v)}^{-1}\right)(\udd \beta)\\
	& \qquad = \int_{\R}\left(|\beta|^2 \wedge 1 \right)\, \left(\mu \circ \scapro{\cdot}{B^*T^*(s)v}^{-1}\right)(\udd \beta)\\
	& \qquad =\int_U\left(  \scapro{T(s)Bu}{v}^2 \wedge 1 \right)\, \mu (\udd u).	\end{align*} 
Consequently, Fatou's lemma guarantees for each $v \in V$ that
	\begin{align*}
	\int_V\left(\scapro{h}{v}^2 \wedge 1\right)\xi_\infty(\udd h) & = \int_0^{\infty}\int_U\left(  \scapro{T(s)Bu}{v}^2 \wedge 1 \right) \mu (\udd u)\ud s\\
	& \le \liminf_{n\to\infty} \int_0^{\infty}\int_U\left(  \scapro{T(s)Bu}{\pi_n(v)}^2 \wedge 1 \right) \mu (\udd u)\ud s. 
	\end{align*}
Applying \eqref{eq3.cyl_levy_pf} completes the proof.
\end{proof}

\begin{lemma}\label{le.eta_continuity_pi_n}
	If \eqref{condition2_levy_measure_inv_iff} holds, then the mapping 
	\begin{align*}
	f \colon V \to \C, \qquad f(v):=  \int_V \left( \cos(\langle h,v\rangle) -1\right)\,\xi_\infty(\udd h)
	\end{align*}
	satisfies $f(\pi_nv) \to f(v)$ as $n \to \infty$ for each $v\in V$. 
\end{lemma}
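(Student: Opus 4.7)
The plan is to rewrite $f$ as an iterated integral over $[0,\infty)\times U$ via the definition of $\xi_\infty$ and apply dominated convergence in $s$, with the inner integrals over $U$ treated as integrals of cylindrical functions. For any $w\in V$, since the integrand $h\mapsto\cos(\langle h,w\rangle)-1$ depends on $h$ only through $\langle h,w\rangle$ and is therefore cylindrical, the definition \eqref{eq.def-eta} of $\xi_\infty$ as the image of $\text{leb}\otimes\mu$ under $\chi_\infty$ yields
\begin{align*}
f(w)=\int_0^\infty\int_U\big(\cos(\langle T(s)Bu,w\rangle)-1\big)\,\mu(\udd u)\,\ud s,
\end{align*}
where the inner integral is rigorously interpreted through the one-dimensional pushforward of $\mu$ by the functional $\langle\cdot,B^*T^*(s)w\rangle$, which is a genuine Borel measure on $\R$.

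First I would establish pointwise convergence in $s$ of the inner integral at $w=\pi_n v$ to that at $w=v$. Since $\pi_n v\to v$ in $V$ and $B^*T^*(s)$ is continuous, $B^*T^*(s)\pi_n v\to B^*T^*(s)v$ in $U$. Invoking \cite[Lemma~4.4]{Riedle_infinitely} exactly as in the proof of Lemma \ref{eta_is_cyl_levy}, the finite measures $(\beta^2\wedge 1)(\mu\circ\langle\cdot,B^*T^*(s)\pi_n v\rangle^{-1})$ converge weakly on $\R$. Since $\beta\mapsto(\cos\beta-1)/(\beta^2\wedge 1)$ extends to a bounded continuous function on $\R$ (with value $-\tfrac{1}{2}$ at the origin), the inner integrals converge.

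To apply dominated convergence in $s$, I would construct an $s$-integrable dominant uniform in $n$. Using $|\cos x-1|\le 2(x^2\wedge 1)$ followed by the Cauchy--Schwarz inequality and the elementary bound $(ca)\wedge 1\le\max(c,1)(a\wedge 1)$, the absolute value of the inner integral is bounded by
\begin{align*}
2\max(\|v\|^2,1)\int_U\Big(\sum_{k=1}^n\langle u,B^*T^*(s)h_k\rangle^2\wedge 1\Big)\mu(\udd u).
\end{align*}
This expression is monotonically increasing in $n$, so setting $H(s)$ to be its supremum, monotone convergence in $s$ together with assumption \eqref{condition2_levy_measure_inv_iff} gives $\int_0^\infty H(s)\,\ud s<\infty$. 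Dominated convergence then yields $f(\pi_n v)\to f(v)$.

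The main subtlety, and the reason one cannot simply dominate in the cylindrical integral directly, is that both $\mu$ on $U$ and the induced cylindrical measure $\xi_\infty$ on $V$ may fail to extend to genuine measures on the underlying Hilbert spaces. In particular, the naive dominant $\|T(s)Bu\|^2\wedge 1$ is not a cylindrical function of $u$ and cannot be integrated against $\mu$ a priori; one must replace it by its monotone cylindrical approximations $\sum_{k=1}^n\langle u,B^*T^*(s)h_k\rangle^2\wedge 1$, for which condition \eqref{condition2_levy_measure_inv_iff} supplies exactly the uniform integrability in $n$ that is needed.
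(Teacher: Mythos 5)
Your argument is correct and follows essentially the same route as the paper's proof: both reduce $f(\pi_n v)$ to the iterated integral $\int_0^\infty r_n(s)\,\ud s$, obtain pointwise convergence of $r_n(s)$ from \cite[Lemma~4.4]{Riedle_infinitely} via the bounded continuous function $\beta\mapsto(\cos\beta-1)/(\beta^2\wedge 1)$, and dominate $|r_n(s)|$ by $C\int_U\bigl(\sum_{k=1}^n\langle u,B^*T^*(s)h_k\rangle^2\wedge 1\bigr)\mu(\udd u)$ using Cauchy--Schwarz, with the monotone convergence theorem and \eqref{condition2_levy_measure_inv_iff} supplying the integrable majorant for Lebesgue's theorem. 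No substantive differences.
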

\begin{proof} We first note that by monotone convergence theorem and \eqref{condition2_levy_measure_inv_iff}, it follows  that
	\begin{align*}
	&\int_0^{\infty}\sup_{n\geq 1} \int_U\left( \sum_{k=1}^n \langle u, B^*T^*(s)h_k\rangle^2 \wedge 1 \right)\, \mu (\udd u)\ud s\\ 
	& \qquad  =\sup_{n\geq 1} \int_0^{\infty}\int_U\left( \sum_{k=1}^n \langle u, B^*T^*(s)h_k\rangle^2 \wedge 1 \right)\, \mu (\udd u)\ud s < \infty. \numberthis \label{condition2_levy_mct}
	\end{align*}
Let $v\in V$ be fixed and define for each $n\in \N$ the function
	\begin{align*}
	r_n\colon [0,\infty) \to \C, \qquad r_n(s):=\int_U\left(\cos\left(\scapro{T(s)Bu}{\pi_nv}\right)-1\right)\, \mu(\udd u).
	\end{align*}
It follows that 
	\begin{align}\label{eq.apply-Lebesgue-here}
	f(\pi_nv) & = \int_0^{\infty}\int_U\left(\cos (\scapro{T(s)Bu}{\pi_nv})-1\right)\,\mu(\udd u)\ud s = \int_0^{\infty} r_n(s)\ud s.
	\end{align}
Define the bounded and continuous function 
\begin{align*}
g\colon \R \to \C, \qquad g(\beta) = \left\{ \begin{array}{ll}
\frac{\cos(\beta)-1}{\beta^2 \wedge 1}, & \text{if  } \beta\ne 0,\\
 -\frac{1}{2}, & \text{if  }\beta =0. \end{array}\right.
\end{align*}
Lemma 4.4 in \cite{Riedle_infinitely} implies that 
	\begin{align*}
	\lim_{n\to\infty}r_{n}(s) 
	& =\lim_{n\to\infty}\int_Ug\left( \scapro{u}{B^*T^*(s)\pi_nv}\right)\left(  \scapro{u}{B^*T^*(s)\pi_nv}^2 \wedge 1 \right) \,\mu (\udd u)\\
	&  =\lim_{n\to\infty} \int_{\R}g(\beta)\left(|\beta|^2 \wedge 1\right) \, \left(\mu \circ \scapro{\cdot}{B^*T^*(s)\pi_nv}^{-1}\right)(\udd \beta)\\
	&  = \int_{\R}g(\beta)\left(|\beta|^2 \wedge 1\right) \, \left(\mu \circ \scapro{\cdot}{B^*T^*(s)v}^{-1}\right)(\udd \beta)\\
	&  =\int_U \left(\cos (\scapro{T(s)Bu}{v})-1\right) \,\mu(\udd u).
	\numberthis \label{eq43.cyl_levy_pf}
	\end{align*}
For each $n \in \N$ and $s\ge 0$, we obtain by the Cauchy-Schwarz inequality that 
	\begin{align*}
	|r_{n}(s)| &= \left|\int_Ug\big(\scapro{u}{B^*T^*(s)\pi_nv})\big)\left(  \scapro{u}{B^*T^*(s)\pi_nv}^2 \wedge 1 \right)\, \mu(\udd u)\right|\\
	& \le \|g\|_{\infty}\int_U\left(\langle \pi_nT(s)Bu,v\rangle^2 \wedge 1\right)\, \mu(\udd u)\\
	& \le \|g\|_{\infty}\max\{1,\|v\|^2\}\int_U\left(\sum_{k=1}^n\scapro{T(s)Bu}{h_k}^2 \wedge 1\right)\,\mu(\udd u). \label{eq51.alpha_bdd}\numberthis
	\end{align*}
	In view of \eqref{condition2_levy_mct}, \eqref{eq43.cyl_levy_pf} and \eqref{eq51.alpha_bdd}, applying Lebesgue's dominated convergence theorem to
 \eqref{eq.apply-Lebesgue-here} completes the proof.
\end{proof}
\begin{lemma}\label{le.levy_ext_eqv} The following conditions are equivalent:
	\begin{enumerate}[(a)]
\item the cylindrical measure $\xi_\infty$ defined 
in \eqref{eq.def-eta} extends to a L\'evy measure on $\Borel(V)$.
		\item Conditions \eqref{condition2_levy_measure_inv_iff} and \eqref{condition_levy_measure_inv_iff}  are satisfied.
	\end{enumerate}
\end{lemma}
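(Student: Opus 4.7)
The proof naturally splits into the two directions, with the forward implication being routine and the reverse being the main work.

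For (a) $\Rightarrow$ (b), the key observation is that by the very definition of $\xi_\infty$ as the pushforward of $\text{leb}\otimes\mu$ under $\chi_\infty$, for every orthogonal projection $P$ on $V$ of finite rank the map $h\mapsto \|Ph\|^2\wedge 1$ is cylindrical and hence
\[
\int_V (\|Ph\|^2\wedge 1)\,\hat\xi_\infty(\udd h) = \int_0^\infty\!\!\int_U(\|PT(s)Bu\|^2\wedge 1)\,\mu(\udd u)\ud s,
\]
where $\hat\xi_\infty$ denotes the Radon extension. Taking $P=\pi_n$ and using $\|\pi_n h\|\le \|h\|$ yields condition \eqref{condition2_levy_measure_inv_iff} from the L\'evy property $\int(\|h\|^2\wedge 1)\,\hat\xi_\infty(\udd h)<\infty$. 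Taking $P=\pi_n-\pi_{m-1}$ and using $\|(\pi_n-\pi_{m-1})h\|\le \|(\I-\pi_{m-1})h\|\to 0$ for every $h$, together with dominated convergence against the integrable majorant $\|h\|^2\wedge 1$, produces \eqref{condition_levy_measure_inv_iff}.

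For (b) $\Rightarrow$ (a), I would construct $\hat\xi_\infty$ as the L\'evy measure of a limit of finite-dimensional infinitely divisible probability measures. Under \eqref{condition2_levy_measure_inv_iff}, for each $n$ the pushforward $\xi_n:=\xi_\infty\circ \pi_n^{-1}$ is, by Lemma \ref{eta_is_cyl_levy}, a genuine L\'evy measure on the finite-dimensional subspace $\pi_n V$, which I embed canonically into $V$. Let $\rho_n$ denote the symmetric infinitely divisible Radon probability measure on $V$, supported on $\pi_n V$, with characteristic function
\[
\phi_{\rho_n}(v) = \exp\!\left(\int_V(\cos\scapro{h}{v}-1)\,\xi_n(\udd h)\right) = \exp(f(\pi_n v)),
\]
where $f$ is the function from Lemma \ref{le.eta_continuity_pi_n}. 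By that lemma, $\phi_{\rho_n}(v)\to \exp(f(v))$ pointwise in $v\in V$.

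The heart of the argument is tightness of $(\rho_n)_{n\in\N}$. Fix $\epsilon>0$; by \eqref{condition_levy_measure_inv_iff} choose $m$ such that $\sup_{n\ge m}\int_V(\|(\pi_n-\pi_{m-1})h\|^2\wedge 1)\,\xi_\infty(\udd h)<\epsilon$. Decompose $\rho_n$ as the convolution of its image under $\pi_{m-1}$, supported on the fixed finite-dimensional space $\pi_{m-1}V$ and uniformly tight there by \eqref{condition2_levy_measure_inv_iff}, and its image under $\I-\pi_{m-1}$, whose L\'evy measure has $L^2\wedge 1$-mass less than $\epsilon$; standard estimates for symmetric infinitely divisible measures in terms of their L\'evy measures (via the truncation inequality $\rho(\{\|h\|>R\})\le C(\xi(\{\|h\|>1\})+R^{-2}\int_{\|h\|\le 1}\|h\|^2\xi(\udd h))$) then force the second factor to concentrate near $0$ uniformly in $n$. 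Combining gives tightness, hence Prokhorov produces a weak limit $\tilde\rho$ with $\phi_{\tilde\rho}=\exp(f)$, which is infinitely divisible on $V$. The L\'evy--Khintchine representation on $V$ supplies a L\'evy measure $\hat\xi_\infty$ for $\tilde\rho$; uniqueness of characteristics applied to $\tilde\rho\circ \pi_k^{-1}$ shows $\hat\xi_\infty$ agrees with $\xi_\infty$ on every cylindrical $\sigma$-algebra $\mathcal{Z}(V,\{h_1,\ldots,h_k\})$, so $\hat\xi_\infty$ is the desired extension. I expect the main obstacle is the uniform tightness estimate for $\rho_n$, which requires care because a crude bound on the total mass of the L\'evy measure outside any ball is not available; it is precisely condition \eqref{condition_levy_measure_inv_iff} that replaces the missing classical domination.
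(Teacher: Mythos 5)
Your overall strategy for (b) $\Rightarrow$ (a) is essentially the paper's: build symmetric finite-dimensional infinitely divisible approximations from the pushforwards of $\xi_\infty$ under $\pi_n$, use \eqref{condition2_levy_measure_inv_iff} for the uniform bound and \eqref{condition_levy_measure_inv_iff} to control the tail coordinates, obtain tightness, pass to the weak limit via Lemma \ref{le.eta_continuity_pi_n}, and read off the L\'evy measure of the limit. (The paper runs the tightness step through the characteristic-function criterion of Lemma VI.2.3 in \cite{partha}, estimating $\int(1-\mathrm{Re}\,\phi_{\theta_N})\,g\,\udd\beta$ over blocks of coordinates together with equicontinuity at the origin, rather than through your probabilistic truncation inequality; both routes are viable. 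One small slip: $\rho_n$ is \emph{not} the convolution of its images under $\pi_{m-1}$ and $\I-\pi_{m-1}$ unless its L\'evy measure is concentrated on the union of the two subspaces, but your argument only needs separate control of the two marginal laws, so this is harmless.) Your (a) $\Rightarrow$ (b) matches the paper's brief argument.

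The genuine gap is at the identification step. Because you build $\rho_n$ from the kernel $\cos(\scapro{h}{v})-1$, every $\rho_n$ --- and hence the limit $\tilde\rho$ --- is a \emph{symmetric} infinitely divisible measure, so its L\'evy measure $\hat\xi_\infty$ is symmetric. Uniqueness of the finite-dimensional characteristics applied to $\tilde\rho\circ\pi_k^{-1}$ therefore identifies $\hat\xi_\infty\circ\pi_k^{-1}$ with the symmetrization $\tfrac{1}{2}\left(\xi_\infty+\xi_\infty^{-}\right)\circ\pi_k^{-1}$, where $\xi_\infty^{-}(C)=\xi_\infty(-C)$, and \emph{not} with $\xi_\infty\circ\pi_k^{-1}$; since the cylindrical L\'evy measure $\mu$, and hence $\xi_\infty$, need not be symmetric, your claim that $\hat\xi_\infty$ agrees with $\xi_\infty$ on every cylindrical $\sigma$-algebra is false in general. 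What your construction actually proves is that $\xi_\infty+\xi_\infty^{-}$ extends to a L\'evy measure on $\Borel(V)$. To conclude that $\xi_\infty$ itself extends, an additional comparison argument is required: the paper observes that $\xi_\infty(C)\le(\xi_\infty+\xi_\infty^{-})(C)$ for all $C\in\mathcal{Z}(V)$ and invokes Theorem 3.4 of \cite{OU}, which guarantees that a cylindrical measure dominated on cylindrical sets by one extending to a L\'evy measure also extends to a L\'evy measure. Without this step (or without redoing the construction with the non-symmetrized, compensated exponent $e^{i\scapro{h}{v}}-1-i\scapro{h}{v}\1_{B_{\R}}(\scapro{h}{v})$, which would require reworking your estimates), the proof is incomplete.
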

\begin{proof}(a) $\Rightarrow$ (b). The result follows by making use of the definition of a L\'evy measure, monotone convergence theorem and Lebesgue's theorem.\\
	(b) $\Rightarrow$ (a).
	For any $N\in \N$ let $\rho_N : = (\xi_\infty +\xi_\infty^{-})\circ\pi_N^{-1}$, where $\xi_\infty^{-}(C):=\xi_\infty(-C)$ for all $C\in \mathcal{Z}(V)$. Then $\rho_N$ extends to a measure as $\pi_N$ is Hilbert-Schmidt, and satisfies 
	\begin{align*}
\int_V\left(\|v\|^2 \wedge 1\right) \,\rho_N(\udd v)
=   2 \int_V\left(\sum_{k=1}^N\scapro{v}{h_k}^2 \wedge 1\right)\, \xi_\infty(\udd v)<\infty.
	\end{align*}
Consequently, $\rho_N$ is a genuine L\'evy measure on $\Borel (V)$. The L\'evy-Khinchine Theorem implies that there exists an infinitely divisible probability measure $\theta_N$ on $\Borel(V)$ with characteristic function
	\begin{align*}
	\phi_{\theta_N}\colon V \to \C, \qquad \phi_{\theta_N}(v):= \exp \left(\int_V \left(\cos \left(\scapro{v}{h}\right)-1\right)\, \rho_N(\udd h) \right).
	\end{align*}
By an application of  the inequality $1-\cos \beta \le 2(\beta^2 \wedge 1)$ for all $\beta \in \R$, it follows that for every $v\in V$ we have
	\begin{align*}
	1-\phi_{\theta_N}(v) &= 1-\exp \left(\int_V\cos (\scapro{v}{h}-1)\,\rho_N(\udd h)\right)\\
	& \le \int_V(1-\cos (\scapro{v}{h})\,\rho_N(\udd h)
 \le 2\int_V (\scapro{h}{v}^2 \wedge 1)\,\rho_N(\udd h).
	\end{align*}
By denoting the density of the standard normal distribution on $\Borel(\R^m)$ by $g_m$, 
we obtain for every $m,n \in \N$ with $m\le n$ and $N \in \N$  that
	\begin{align*}
	&\int_{\R^{n-m+1}} \left(1-\text{Re} \phi_{\theta_N}(\beta_mh_m+\cdots+\beta_nh_n)\right)g_{n-m+1}(\beta_m,\ldots,\beta_n)\ud \beta_m\cdots\ud \beta_n\\
	&  \leq 2\int_{\R^{n-m+1}}\int_V \left(\left|\sum_{k=m}^n\beta_k\scapro{h}{h_k}\right|^2\wedge 1\right)\rho_N(\udd h)g_{n-m+1}(\beta_m,\ldots,\beta_n)\ud \beta_m\cdots\ud \beta_n\\
	&  \leq 2\int_V \left(\left(\int_{\R^{n-m+1}}\left|\sum_{k=m}^n\beta_k\scapro{h}{h_k}\right|^2g_{n-m+1}(\beta_m,\ldots,\beta_n)\ud \beta_m\cdots\ud \beta_n \right)\wedge 1\right)\rho_N(\udd h)\\
	&  = 2\int_V\left(\sum_{k=m}^n\scapro{h}{h_k}^2\wedge 1\right)\rho_N(\udd h)\\
	&  = 2\int_V\left(\sum_{k=m}^n\scapro{\pi_Nh}{h_k}^2\wedge 1\right)\,(\xi_\infty+\xi_\infty^{-})(\udd h)\\
	&  \leq 4\int_V\left(\sum_{k=m}^n\scapro{h}{h_k}^2\wedge 1\right)\,\xi_\infty(\udd h)\\
	&  = 4\int_0^{\infty}\int_U\left( \sum_{k=m}^n \langle u, B^*T^*(s)h_k\rangle^2 \wedge 1 \right) \mu (\udd u)\ud s. \label{eq.condition-1-Para}\numberthis
	\end{align*}
Furthermore, for fixed $n \in \N$ and for each $N \in \N$, define the function
	\[\psi_N\colon \R^n \to \C, \qquad \psi_N(\beta_1, \ldots, \beta_n)=  \phi_{\theta_N}\left(\beta_1h_1+\cdots +\beta_nh_n\right).\]
For each $\beta=(\beta_1, \ldots, \beta_n)\in\R^n$ we have
	\begin{align}
	\psi_N(\beta) = \exp \left(\int_V\left(\cos \left(\scapro{\beta_1h_1+\cdots +\beta_nh_n}{h}\right)-1\right)\rho_N(\udd h)\right) .\label{eq.psi_equi_aux1}
	\end{align}
For fixed $n\in\N$, Lebesgue's theorem on dominated convergence implies
	\begin{align*}
	&\left|\int_V\left(\cos \left(\scapro{\beta_1h_1+\cdots +\beta_nh_n}{h}\right)-1\right)\rho_N(\udd h)\right|\\
	& \qquad \leq \int_V\left|\cos \left(\scapro{\beta_1h_1+\cdots +\beta_nh_n}{h}\right)-1\right|\rho_N(\udd h)\\
	& \qquad  \leq 2 \int_V\left(\scapro{\beta_1h_1+\cdots +\beta_nh_n}{h}^2 \wedge 1\right)\rho_N(\udd h)\\
	& \qquad =  4 \int_V\left(\scapro{\beta_1h_1+\cdots +\beta_nh_n}{\pi_Nh}^2 \wedge 1\right)\xi_\infty(\udd h)\\
	& \qquad \leq  4 \int_V\left(|\beta|^2\sum_{k=1}^n\scapro{h_k}{h}^2 \wedge 1\right)\xi_\infty(\udd h)\\
	& \qquad=4 \int_{\R^n}(|\beta|^2|\alpha|^2 \wedge 1) \left(\xi_\infty \circ \pi_{h_1,\ldots,h_n}^{-1}\right)(\udd \alpha)\\
	& \qquad \to 0 \quad \text{ as $|\beta| \to 0$ and uniformly in $N\in\N$.}
	\end{align*}
It follows from \eqref{eq.psi_equi_aux1} that the  family $(\psi_N:N\in\N)$ is equicontinuous at the origin. Applying Condition \eqref{condition_levy_measure_inv_iff} to \eqref{eq.condition-1-Para} and the derived equicontinuity enable us to 
deduce from Lemma VI.2.3 in \cite{partha} that the family $\{\theta_N: N\in\N\}$ is relatively compact in $\mathcal{M}(V)$. Furthermore, Lemma   \ref{le.eta_continuity_pi_n} implies for each $v \in V$ that
	\begin{align*}
	\lim_{N\to\infty}\phi_{\theta_N}(v) &= \lim_{N\to\infty}\exp \left(\int_V \left(\cos \left(\scapro{v}{h}\right)-1\right)\rho_N(\udd h) \right)\\
	&=\lim_{N\to\infty}\exp \left(\int_V \left(\cos \left(\scapro{\pi_Nv}{h}\right)-1\right)(\xi_\infty+\xi_\infty^{-})(\udd h) \right)\\
	&=\exp \left(\int_V \left(\cos \left(\scapro{v}{h}\right)-1\right)(\xi_\infty+\xi_\infty^{-})(\udd h) \right).
	\end{align*}
	It follows by \cite[Lemma VI.2.1]{partha} that $(\theta_N)_{N\in\N}$ converges weakly to an infinitely divisible probability measure $\theta$ and the characteristic function $\phi_\theta$ of $\theta$ is given by 
	\[\phi_{\theta}(v) = \exp \left(\int_V \left(\cos \left(\scapro{v}{h}\right)-1\right)(\xi_\infty+\xi_\infty^{-})(\udd h) \right)\qquad \text{for all }v\in V.\]
	Consequently, $\xi_\infty+\xi_\infty^{-}$ extends to the L\'evy measure of $\theta$. 
	Since
	\[\xi_\infty(C) \le \xi_\infty(C)+\xi_\infty^{-}(C) \qquad \text{ for all  } C\in \mathcal{Z}(V), \] 
	Theorem 3.4 in \cite{OU} implies that $\xi_\infty$ extends to a L\'evy measure on $\Borel(V)$, which completes the proof. 
%
\end{proof}

\begin{proof}[Proof of Theorem \ref{condiff}]
	\emph{Sufficiency}:  suppose that \eqref{condition_cinfty_exist}--\eqref{condition_levy_measure_inv_iff} hold. We first show that the family $(\nu_t:t\ge 0)$ of infinitely divisible probability measures with characteristics $(c_t,S_t,\xi_t)$ is relatively compact in $\mathcal{M}(V)$, for which we use 
the compactness criterion for infinitely divisible probability measures as given  in \cite[Th. VI.5.3]{partha}. We only need to show that
	the set $(\xi_t:t\geq 0)$ restricted to the complement of any neighbourhood of the origin is relatively compact and the operators $R_t\colon V \rightarrow V$ defined by
	\beq\langle R_tv,v\rangle :=\langle S_tv,v\rangle+\int_{\|h\| \leq 1}\langle v,h\rangle^2 \, \xi_t(\udd h) \label{opTt}\eeq
	satisfy 
\begin{align}
 \sup \limits_{t\geq 0} \sum \limits_{k=1}^{\infty}\langle R_th_k,h_k\rangle &<\infty \label{eq.condition-para-finite},\\
 \lim_{m \rightarrow \infty}\sup_{t\geq 0} \sum \limits_{k=m}^{\infty}\langle R_th_k,h_k\rangle &=0. \label{eq.condition-para-0}
\end{align}
For any cylindrical set $C \in  \mathcal{Z}(V)$ and $t\ge 0$, we have
	\begin{align*}
	\xi_t(C) = \int_{0}^t\int_U\1_{C}(T(s)Bu)\,\mu(\udd u)\ud s
	& \leq \int_{0}^\infty\int_U\1_{C}(T(s)Bu)\, \mu(\udd u)\ud s = \xi_\infty(C). 
	\end{align*}
Since  $\Borel(V)$ is the $\sigma$-algebra generated by $\mathcal{Z}(V)$ and $\mathcal{Z}(V)$ is a $\pi$-system, we obtain $\xi_t \leq \xi_{\infty}$ on $\Borel(V)$ for all $t \geq 0$. Let $\xi_t^{c}$ and $\xi^{c}_{\infty}$ denote the restrictions of the measures $\xi_t$ and $\xi_{\infty}$ to the complement of a neighbourhood $V_1 \subseteq V$ of origin. By Lemma~\ref{le.levy_ext_eqv} and \cite[Prop 1.1.3]{linde}, the finite measure $\xi^{c}_{\infty}$ is a Radon measure and therefore, for each $\epsilon >0$  there exists a compact set $K \subseteq V_1^c$ such that $\xi^c_{\infty}(K^c) \le \epsilon$. As a consequence,
	\beq \xi_t^{c}(K^c) \leq \xi_{\infty}^{c}(K^c) \leq \epsilon \quad \text{for all  } t \ge 0, \eeq
	which implies that $(\xi_t: t \geq 0)$ restricted to the complement of any neighbourhood of the origin is relatively compact. Furthermore,  Lebesgue's theorem on dominated convergence and \eqref{finite_trace_condition} imply
	\begin{align*}
	&\lim_{m \rightarrow \infty}\sup_{t\geq 0} \int_0^t \sum_{k=m}^{\infty}\langle T(s)BQB^*T^*(s)h_k,h_k\rangle\ud s\\
	&\qquad = \lim_{m \rightarrow \infty}\int_0^{\infty} \sum_{k=m}^{\infty}\langle T(s)BQB^*T^*(s)h_k,h_k\rangle\ud s =0.\label{Ttcondb11} \numberthis
	\end{align*}
From Condition \eqref{condition_levy_measure_inv_iff} we deduce that 
	\begin{align*}
	\sup_{t\geq 0} \sum_{k=m}^{\infty}\int_{\|h\| \leq 1}\langle h,h_k\rangle^2 \,\xi_t(\udd h)& \le  \sup_{t\geq 0}\, \sup_{n \geq m}\int_{V}\left(\sum_{k=m}^{n}\langle h,h_k\rangle^2 \wedge 1\right)\,\xi_t(\udd h)\\
	&=  \sup_{t\geq 0}\, \sup_{n \geq m}\int_0^t\int_{U}\left(\sum_{k=m}^{n}\langle T(s)Bu, h_k\rangle^2 \wedge 1\right)\,\mu(\udd u)\ud s\\
	& =\sup_{n \geq m}\int_0^{\infty}\int_{U}\left(\sum_{k=m}^{n}\langle T(s)Bu,h_k\rangle^2 \wedge 1\right)\, \mu(\udd u)\ud s\\
	& \to 0 \qquad \text{as  } m \to \infty. \numberthis \label{Ttcondb21}
	\end{align*}
The limits in \eqref{Ttcondb11} and \eqref{Ttcondb21} show that Condition \eqref{eq.condition-para-0} is satisfied.
 Condition \eqref{eq.condition-para-finite} can be proved analogously using \eqref{condition2_levy_measure_inv_iff}, and thus Theorem VI.5.3 in \cite{partha}
imply that $(\nu_t:t\ge 0)$ is relatively compact. 

Since $t\mapsto$tr$(S_t)$ is increasing, Condition \eqref{finite_trace_condition} implies that  the operator 
	\[S_{\infty} := \int_0^{\infty}T(s)BQB^*T(s) \ud s,\]
	is well-defined and  
	\begin{align}\scapro{S_tv}{v} \to \scapro{S_{\infty}v}{v} \qquad  \text{ for all } v\in V. \label{St_to_sinfty}
	\end{align} 
Since $(\xi_t)_{t\ge 0}$ is an increasing family of L\'evy measures and  $\xi_t(A)$  increases to the L\'evy measure $\xi_{\infty}(A)$ for each $A \in \Borel(V)$, we obtain by \cite[Le. 3.3] {rockner_fuhrman} for each $v\in V$ that
	\begin{align}
\int_V
&	\left(e^{i\scapro{h}{v}}-1-i\scapro{h}{v}\1_{B_V}(v)\right) \xi_t(\udd h) \notag \\
&\qquad\qquad\qquad 	\to \int_V
	\left( e^{i\scapro{h}{v}}-1-i\scapro{h}{v}\1_{B_V}(v)\right) \xi_{\infty}(\udd h),  \label{weak_conv_K}
	\end{align}
as $t\to\infty$. 
It follows from \eqref{condition_cinfty_exist}, \eqref{St_to_sinfty} and \eqref{weak_conv_K} that the characteristic function $\phi_{\nu_t}$ of $\nu_t$ converges to the characteristic function $\phi_\nu$ of an infinitely divisible measure $\nu$ with characteristics $(c_\infty, S_{\infty}, \xi_{\infty})$.
Together with   relative compactness of $(\nu_t:\,t\ge 0)$,  Lemma VI.2.1 in \cite{partha} guarantees that   $(\nu_t:t\ge 0)$ converges  in $\mathcal{M}(V)$. 

\emph{Necessity}:  if $(\nu_t:\, t\ge 0)$ converges weakly as $t \to \infty$ then \eqref{condition_cinfty_exist}-\eqref{condition_levy_measure_inv_iff} follow by the compactness criterion of infinitely divisible probability measures in Hilbert spaces as applied before. 
\end{proof}

\begin{example}\label{ex.canonical_alpha_invariant} 
In this example, we assume that $U=V$ and $B={\rm Id} $ in equation \eqref{SCP1}. 
Let $L$ be the canonical $\alpha$-stable cylindrical L\'evy process for $\alpha \in (0,2)$, which is defined in \cite{Riedle_alpha_stable} by requiring that its characteristic function is of the form
\begin{align*}
\phi_{L(t)}(u)=\exp\left(-t\|u\|^\alpha\right)
\qquad\text{for all }u\in U, \, t\ge 0. 
\end{align*}
Assume that there exists an orthonormal basis $(e_k)_{k\in\N}$ of $U$ and an increasing sequence $(\lambda_k)_{k\in\N}\subseteq [0,\infty)$ with $T^\ast(t)e_k=e^{-\lambda_k t}e_k$ for all $t\ge 0$ and $k\in\N$. 
According to Theorem 4.1 in \cite{Riedle_alpha_stable}, the semigroup $(T(t))_{t\ge 0}$ is stochastically integrable on $[0,T]$ with respect to $L$ if and only if 
	\begin{align*}\
	\int_0^{T}\|T(s)\|^{\alpha}_{\text{HS}}\ud s< \infty,
	\end{align*}
where $\|\cdot \|_{\text{HS}}$ denotes the Hilbert-Schmidt norm. Theorem \ref{condiff} guarantees that there exists a stationary solution if and only if 
	\begin{align}\label{eq.int-stable-infty}
	\int_0^{\infty}\|T(s)\|^{\alpha}_{\text{HS}}\ud s< \infty.
	\end{align}
This can be seen by verifying Conditions \eqref{condition2_levy_measure_inv_iff} and \eqref{condition_levy_measure_inv_iff}	by similar arguments as exploited in the proof of Theorem 4.1 in \cite{Riedle_alpha_stable}. 

For example, a sufficient assumption for the validity of 
\eqref{eq.int-stable-infty} is
\begin{align*}
\sum_{k=1}^\infty \frac{1}{\lambda_k}<\infty. 
\end{align*}
This follows since Cauchy-Schwartz inequality implies
\begin{align*}
	\int_0^\infty \|T(s)\|^{\alpha}_{\text{HS}}\ud s & = 
\int_0^\infty \left(\sum_{k=1}^\infty e^{-2\lambda_ks}\right)^{\alpha/2}\ud s\\
	&\leq \int_0^\infty e^{-\frac{\alpha}{2}\lambda_1s}\left(\sum_{k=1}^\infty e^{-\lambda_ks}\right)^{\alpha/2}\ud s\\
	&\leq \left(\int_0^\infty e^{-\frac{\alpha}{2-\alpha}\lambda_1s}\ud s\right)^{\frac{2-\alpha}{2}}\left(\int_0^\infty \sum_{k=1}^\infty e^{-\lambda_ks}\ud s\right)^{\alpha/2}\\
	&=\left(\frac{2-\alpha}{\alpha\lambda_1}\right)^{\frac{2-\alpha}{2}} \left(\sum_{k=1}^{\infty}\frac{1}{\lambda_k}\right)^{\alpha/2}.
	\end{align*}
\end{example}

\begin{example}
More specifically, we consider the heat equation on a bounded domain ${\mathcal O}$ in $\R^d$ with smooth boundary for some $d\in\N$ in the setting of the previous Example \ref{ex.canonical_alpha_invariant}. In this case, the generator $A$ is given by the Laplace operator $\Delta$
on $U=L^2({\mathcal O})$ and $L$ is the canonical $\alpha$-stable cylindrical L\'evy process for $\alpha\in (0,2)$. Weyl's law guarantees that the eigenvalues of $A$ satisfy $\lambda_k = c_k k^{2/d}$ for all $k\in\N$, where $c_k \in [a,b]$ for some $a,b>0$. Example 4.2 in \cite{Riedle_alpha_stable} shows that there exists a solution if and only if $\alpha d<4$. We claim that the same condition is sufficient and necessary for the existence of a stationary solution. 

First, suppose $\alpha d <4$. By the integral test for convergence of series we obtain for each $s>0$ that
\begin{align*}
\|T(s)\|_{\text{HS}}^2 = \sum_{k=1}^{\infty}e^{-2c_ksk^{2/d}}
&\leq e^{-as}\sum_{k=1}^{\infty}e^{-ask^{2/d}}\\
& \leq e^{-as} \left(\int_0^{\infty}e^{-asx^{2/d}}\ud x\right)
=e^{-as}\frac{d \,\Gamma (\frac{d}{2})}{2a^{d/2}s^{d/2}}.
\end{align*}
Consequently, Condition \eqref{eq.int-stable-infty} is satisfied since 
\begin{align*}
\int_0^{\infty}	\|T(s)\|_{\text{HS}}^{\alpha} \ud s
& \le c_a \left(\int_0^{1}\frac{e^{-\frac{a\alpha s}{2}}}{s^{\frac{\alpha d}{4}}}\ud s+\int_1^\infty \frac{e^{-\frac{a\alpha s}{2}}}{s^{\frac{\alpha d}{4}}}\ud s\right)\\
&\le c_a \left(\int_0^{1}\frac{1}{s^{\frac{\alpha d}{4}}}\ud s+\int_1^\infty e^{-\frac{a\alpha s}{2}}\ud s\right)
 < \infty,
\end{align*}
where $c_a := \left(\frac{d \,\Gamma (\frac{d}{2})}{2a^{d/2}} \right)^{\alpha /2}$.
On the other hand, the integral test for series implies
\begin{align*}
\|T(s)\|_{\text{HS}}^2 =\sum_{k=1}^{\infty}e^{-2c_ksk^{d/2}}
& \geq -1+  \int_0^{\infty}e^{-2bsx^{d/2}}\ud x
=-1+\frac{d \,\Gamma (\frac{d}{2})}{2(2b)^{d/2}s^{d/2}},
\end{align*}
which results in  $\int_0^{1}	\|T(s)\|_{\text{HS}}^{\alpha} \ud s=\infty$ for $\alpha d\ge 4$.
\end{example}

\begin{remark}
 If $L$ is the genuine L\'evy process with (classical) characteristics $(b, Q, \mu)$, then the cylindrical characteristics of $L$ are given by $(a,Q,\mu)$ where
\begin{align}\label{eq.adef} a(u^*)= \langle b,u^*\rangle + \int_U  \langle u, u^* \rangle\left(\1_{B_{\R}}(\langle u, u^* \rangle) - \1_{B_U}(u) \right) \mu(\udd u).
\end{align}
Then for every $v\in V$, we have by \eqref{ct_defn} and \eqref{eq.adef}, 
\begin{align*}
\langle c_t,v\rangle 
&= \int_0^t a(B^*T^*(s)v)\ud s\\
& \qquad  +\int_0^t\int_U\langle u, B^*T^*(s)v\rangle \big(\1_{B_V}(T(s)Bu)-\1_{B_{\mathbb{R}}}(\langle u, B^*T^*(s)v\rangle)\big)\mu(\udd u)\ud s\\
&  =\int_0^t \langle T(s)Bb,v\rangle \ud s+ \int_0^t\int_U  \langle T(s)Bu, v \rangle\big(\1_{B_{V}}( T(s)Bu) - \1_{B_U}(u) \big) \mu(\udd u)\ud s.
\end{align*}
As a consequence, we observe that in this case, Theorem \ref{condiff} is equivalent to the well-known result from \cite{michalik}: 
the sequence $(\nu_t:\, t\ge 0)$ converges weakly if and only if the following conditions are satisfied:
\begin{enumerate}[(i)]
	\item  There exists
	\begin{align*}
	\label{levy_drift_cond}\lim_{t\rightarrow \infty}\left(\int_0^tT(s)Bb\ud s+ \int_0^t\int_U T(s)Bu  \big(\1_{B_V}(T(s)Bu)-1_{B_U}(u)\big)\,\mu(\udd u)\ud s\right) ;
	\end{align*} 
	\item \itemEq{\int_0^{\infty} \mathrm{tr}\left(T(s)BQB^*T^*(s)\right)\ud s <\infty;}
	\item 
	\itemEq{\label{levy_meas_cond_levy} \int_0^{\infty}\int_U\left( \|T(s)Bu\|^2 \wedge 1 \right) \mu(\udd u)\ud s < \infty.}
\end{enumerate}
The equivalence of \eqref{levy_meas_cond_levy} and the Conditions \eqref{condition2_levy_measure_inv_iff} and \eqref{condition_levy_measure_inv_iff} can be obtained by noting that in this case $\mu$ is a genuine L\'evy measure  and consequently $\xi_\infty = ( \text{leb}\otimes \mu ) \circ \chi_{[0,\infty)}^{-1}$ is also a genuine measure.  By Lemma \ref{le.levy_ext_eqv}, the Conditions \eqref{condition2_levy_measure_inv_iff} and \eqref{condition_levy_measure_inv_iff} are equivalent to the Condition that $\xi_\infty$ is a L\'evy measure which is equivalent to \eqref{levy_meas_cond_levy}.
\end{remark}

It is well known that in general an invariant measure is not necessarily unique. As in the case of genuine L\'evy processes, we obtain uniqueness if the semigroup 
 $(T(t))_{t\geq 0}$ on $V$ is stable, that is $T(t)v \to 0$ as $t\to \infty$  for each $v\in V$. 
\begin{thm}\label{th.stable_case} If the semigroup $(T(t))_{t\geq 0}$ is stable, then there exists a stationary measure $\nu$ for the process \eqref{OUP} if and only if  $(\nu_t)_{t\ge 0}$ converges weakly; in this case the limit of $(\nu_t)_{t\ge 0}$ equals the stationary measure. 
\end{thm}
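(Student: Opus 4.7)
The sufficiency direction is immediate from Lemma~\ref{stationarym}(a): any weak limit of $(\nu_t)_{t\ge 0}$ is a stationary measure for~\eqref{OUP}. The substance of the theorem lies in the converse, on which my plan focuses. Suppose $\nu$ is a stationary measure, so that $\nu=T_t\nu\ast\nu_t$ for every $t\ge 0$. Stability yields $T_t\nu\to\delta_0$ weakly: for any $f\in C_b(V)$, dominated convergence applied to $\int f(T(t)v)\,\nu(\udd v)$ produces $f(0)$, since $T(t)v\to 0$ for every $v\in V$. Consequently $\phi_\nu(T^\ast(t)v)=\phi_{T_t\nu}(v)\to 1$ for each $v\in V$, and since every $\nu_t$ is infinitely divisible its characteristic function never vanishes, so dividing the stationarity identity produces the pointwise limit
\[\phi_{\nu_t}(v)=\frac{\phi_\nu(v)}{\phi_\nu(T^\ast(t)v)}\longrightarrow \phi_\nu(v)\qquad\text{for every }v\in V.\]

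From here the plan is to verify conditions~\eqref{condition_cinfty_exist}--\eqref{condition_levy_measure_inv_iff} of Theorem~\ref{condiff}, which would then deliver $\nu_t\to\nu$ in $\mathcal{M}(V)$ directly. Taking absolute values in the identity above and using $\Re\,\Psi\le 0$ from the L\'evy--Khintchine form~\eqref{cyl_levy_symbol}, the non-negative integrals
\[\int_0^t\tfrac{1}{2}\scapro{QB^\ast T^\ast(s)v}{B^\ast T^\ast(s)v}\,\udd s \qquad\text{and}\qquad \int_0^t\!\!\int_U\!\bigl(1-\cos\scapro{B^\ast T^\ast(s)v}{h}\bigr)\,\mu(\udd h)\,\udd s\]
are monotone in $t$ and their sum increases to the finite limit $-\log|\phi_\nu(v)|$ as $t\to\infty$ for every $v\in V$; an analogous discussion of the imaginary part delivers the existence of the limit of $c_t$ in~\eqref{ct_defn}, i.e.\ condition~\eqref{condition_cinfty_exist}.

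The main obstacle is upgrading these \emph{pointwise} (in $v$) integrability statements to the \emph{uniform} trace-class and L\'evy-measure assertions \eqref{finite_trace_condition}, \eqref{condition2_levy_measure_inv_iff}, and~\eqref{condition_levy_measure_inv_iff}. The crucial extra input is that $\nu$ is a genuine Radon probability measure on the Hilbert space $V$, so by Sazonov's theorem $\phi_\nu$---and hence $v\mapsto-\log|\phi_\nu(v)|$ in a neighbourhood of the origin---is continuous in the Sazonov topology. I would exploit this property, together with the homogeneity obtained from rescaling $v\mapsto tv$ and letting $t\to 0$, to dominate the limiting quadratic form $v\mapsto \int_0^\infty \scapro{QB^\ast T^\ast(s)v}{B^\ast T^\ast(s)v}\,\udd s$ by $\scapro{H^\ast Hv}{v}$ for some Hilbert--Schmidt operator $H$, which forces~\eqref{finite_trace_condition}; the same S-continuity, applied along the finite-dimensional projections $\pi_n$ used in Section~\ref{se.invariant} and combined with Lemma~\ref{le.levy_ext_eqv}, yields conditions~\eqref{condition2_levy_measure_inv_iff} and~\eqref{condition_levy_measure_inv_iff}.

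Once Theorem~\ref{condiff} applies, $(\nu_t)$ converges weakly in $\mathcal{M}(V)$, and the pointwise convergence of characteristic functions identifies its limit as $\nu$. Uniqueness is then a one-line corollary of Lemma~\ref{stationarym}(b): any stationary $\lambda$ decomposes as $\lambda=\beta\ast\nu$ with $T_t\beta=\beta$ for every $t\ge 0$; stability applied to $\beta$ forces $T_t\beta\to\delta_0$ weakly, so $\beta=\delta_0$ and $\lambda=\nu$.
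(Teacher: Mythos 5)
Your first half is sound and matches the opening of the classical argument the paper refers to (Chojnowska-Michalik, Prop.~6.1): stability gives $T_t\nu\to\delta_0$, and the factorisation $\phi_\nu(v)=\phi_\nu(T^\ast(t)v)\,\phi_{\nu_t}(v)$ yields $\phi_{\nu_t}(v)\to\phi_\nu(v)$ pointwise. (One small patch is needed even here: non-vanishing of $\phi_{\nu_t}$ does not by itself license the division; you must also rule out $\phi_\nu(T^\ast(t)v)=0$. This follows by iterating the factorisation: if $\phi_\nu(w)=0$ then $\phi_\nu(T^\ast(s)w)=0$ for all $s$, contradicting $\phi_\nu(T^\ast(s)w)\to\phi_\nu(0)=1$.)

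The genuine gap is in the second half. Pointwise convergence of characteristic functions does not imply weak convergence on an infinite-dimensional $V$; you need relative compactness of $(\nu_t)_{t\ge0}$, and your route to it --- verifying conditions \eqref{condition_cinfty_exist}--\eqref{condition_levy_measure_inv_iff} of Theorem~\ref{condiff} via Sazonov continuity of $\phi_\nu$ --- is only asserted, not carried out. Concretely: (i) your ``analogous discussion of the imaginary part'' would at best give convergence of $\scapro{c_t}{v}$ for each fixed $v$, whereas \eqref{condition_cinfty_exist} requires convergence of $c_t$ in the norm of $V$ (in the paper this step alone needs a separate relative-compactness estimate $\sup_t\sum_{k\ge m}\scapro{c_t}{e_k}^2\to0$); (ii) conditions \eqref{condition2_levy_measure_inv_iff} and \eqref{condition_levy_measure_inv_iff} are uniform, basis-dependent tightness statements, and extracting them from S-continuity of $\phi_\nu$ amounts to reproving the necessity half of the compactness criterion [Parthasarathy, Th.~VI.5.3], which takes relative compactness of $(\nu_t)$ as \emph{input} --- so as sketched the plan is circular or at least leaves all the hard work undone. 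The much shorter completion, which is what the cited classical proof does, is to get relative compactness of $(\nu_t)$ directly from the factorisation $\nu=T_t\nu\ast\nu_t$: the singleton $\{\nu\}$ is compact and $(T_t\nu)_{t\ge0}$ is relatively compact (it converges to $\delta_0$), so Theorem~III.2.1 in \cite{partha} --- exactly the tool already used in the proof of Lemma~\ref{stationarym}(b) --- gives relative compactness of $(\nu_t)_{t\ge0}$. Combined with your pointwise convergence $\phi_{\nu_t}\to\phi_\nu$, Lemma~VI.2.1 in \cite{partha} then yields $\nu_t\to\nu$ weakly, and the whole programme of verifying the conditions of Theorem~\ref{condiff} can be dropped. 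Your closing uniqueness remark via Lemma~\ref{stationarym}(b) is correct, though it is not part of the statement being proved.
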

\begin{proof}
Our claim in the cylindrical setting can be proved as in the classical situation;
see  \cite[Prop.\ 6.1]{michalik}.
\end{proof}

Combining Theorem \ref{th.stable_case} with Theorem \ref{condiff}, we obtain 
that, if the semigroup is stable, then  Conditions \eqref{condition_cinfty_exist}-\eqref{condition_levy_measure_inv_iff} of Theorem \ref{condiff} are necessary and sufficient for the existence of a stationary measure for the process \eqref{OUP}, 
which in this case is unique.

\section{The case of exponentially stable semigroups}\label{se.stable_case}
In general the conditions of Theorem \ref{condiff} may be difficult to verify in practice, in particular Condition \eqref{condition_cinfty_exist} for the drift component $c_t$.
If the semigroup is exponentially stable, i.e.\ there exists $C >1$ and $\lambda >0$ such that $\|T(t)\| \leq C e^{-\lambda t}$ for all $t \geq 0$, and $L$ is a genuine L\'evy process, then a sufficient condition  for the existence of stationary measure is that the L\'evy measure $\mu$ of $L$ satisfies the following simple condition
\begin{align}\label{eq.classical-log-condition}
\int_U \log^+ \|u\| \,\mu(\udd u) < \infty,
\end{align} 
where $\log^+ x := \log x$ if $x \geq 1$ and 0 otherwise; see \cite[Th.\ 6.7]{michalik}.
This condition is also necessary if $V$ is finite dimensional (see  \cite[Th.\ 4.3.17]{app_levy} and references therein) or if the semigroup $(T(t))_{t\in\R}$ is a group (see \cite[Prop.\ 6.8]{michalik}) but in general is not necessary (see  \cite[Ex.\ 3.15]{michalik_stationarity}). In the case of a semigroup $(T(t))_{t\geq 0}$ with spectral decomposition $T(t)e_k = e^{-\lambda_k} e_k$ (e.g.\ the heat semigroup) where the eigenvalues $(\lambda_k)$ satisfy some mild conditions (see \eqref{exp.condition}),  the following  weaker condition
\[\int_U \sup_{ n\in\N} \left(\frac{\log^+|\langle u,e_n\rangle|}{\lambda_n} \right) \,\mu(\udd u) < \infty,\] 
is shown in \cite{michalik_heat} to be both necessary and sufficient for the existence of a stationary measure when $L$ is a genuine L\'evy process. In the  main result of this section, we generalise this condition for the case of cylindrical L\'evy processes and give some examples.

Without loss of generality, we assume  $U = V$  and $B=\text{Id}$ in the rest of this section. We assume that $A$ is a self-adjoint strictly negative operator with compact resolvent. Consequently, $A$ has a purely point spectrum $(-\lambda_k)_{k\in\N}$, where
\begin{align}\label{eq.conditon-eigenvalues} 0< \lambda_1 \leq \lambda_2 \leq \cdots \quad\text{ and }\quad
  \lim_{k\rightarrow \infty} \lambda_k = \infty,
 \end{align}
and there is an orthonormal basis $(e_k)_{k\in\N}$ in $V$ consisting of  eigenvectors $e_k$ of $A$ corresponding to the eigenvalues $-\lambda_k$. Then $A$ is a generator of the $C_0$-semigroup $(T(t))_{t \geq 0}$ of  bounded linear operators on $V$, given by the formula:
\beq \label{heat_semigroup}
T(t)v = \sum_{k=1}^{\infty}e^{-\lambda_kt} \langle v, e_k\rangle e_k \quad \text{    for   } v \in V.
\eeq
Clearly, the semigroup $(T(t))_{t \geq 0}$ satisfies
\begin{align} \label{eq.heat_semigp_exp_stable}
\|T(t)v\| \leq e^{-\lambda_1 t} \|v\| \quad \text{ for all $v \in V$ and $t \ge 0$},
\end{align}
and is therefore exponentially stable.

\begin{thm}\label{th.heat_semigrp_iff}
Assume that the semigroup $(T(t))_{t \geq 0}$ is given by \eqref{heat_semigroup} and satisfies
	\begin{align} \label{exp.condition}
	\sum_{k=1}^{\infty}\frac{1}{\lambda_k} < \infty.
	\end{align}
	 Then the following are equivalent:
	\begin{enumerate}[(a)]
\item[{\rm (a)}] There exists a stationary measure for the process \eqref{OUP};
\item[{\rm (b)}] \begin{enumerate}[(i)]
			\item[{\rm (i)}] \itemEq{\label{logcond_sup_compact_semigroup} \sup_{n\geq 1} \int_U \max_{1\leq k\leq n} \left(\frac{\log^+|\langle u,e_k\rangle|}{\lambda_k} \right) \,\mu(\udd u) < \infty; }
\item[{\rm (ii)}]  \itemEq{\label{logcond_compact_semigroup}\limsup_{m\rightarrow \infty} \sup_{n\geq m} \int_U \max_{m\leq k\leq n} \left(\frac{\log^+|\langle u,e_k\rangle|}{\lambda_k} \right)\, \mu(\udd u) = 0. }
		\end{enumerate}
	\end{enumerate}
\end{thm}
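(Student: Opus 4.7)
My strategy is to apply the general characterization in Theorem~\ref{condiff} combined with Theorem~\ref{th.stable_case}. The semigroup \eqref{heat_semigroup} is exponentially stable by \eqref{eq.heat_semigp_exp_stable}, so Theorem~\ref{th.stable_case} reduces the existence of a stationary measure to the weak convergence of $(\nu_t)_{t\ge 0}$, which by Theorem~\ref{condiff} is equivalent to conditions \eqref{condition_cinfty_exist}--\eqref{condition_levy_measure_inv_iff}. With $B=\I$ and $T^\ast(s)e_k = e^{-\lambda_k s}e_k$, setting $u_k:=\scapro{u}{e_k}$ yields $\scapro{u}{T^\ast(s)e_k}^2=e^{-2\lambda_k s}u_k^2$, so the Gaussian trace condition \eqref{finite_trace_condition} becomes $\sum_k\scapro{Qe_k}{e_k}/(2\lambda_k)<\infty$, which is automatic in view of \eqref{exp.condition} and the boundedness of $Q$. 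The drift condition \eqref{condition_cinfty_exist} can be verified by the same exponentially-stable-semigroup argument as in \cite[Prop.~6.7]{michalik}. The heart of the proof is therefore to match the L\'evy measure conditions \eqref{condition2_levy_measure_inv_iff}, \eqref{condition_levy_measure_inv_iff} with (i), (ii).

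For the direction \eqref{condition2_levy_measure_inv_iff}$+$\eqref{condition_levy_measure_inv_iff} $\Rightarrow$ (i)$+$(ii) the argument is short: the pointwise bound $(\sum_k a_k)\wedge 1\ge \max_k(a_k\wedge 1)$ together with the identity
\begin{align*}
\int_0^\infty (e^{-2\lambda_k s}u_k^2\wedge 1)\,\ud s \ge \frac{\log^+|u_k|}{\lambda_k},
\end{align*}
obtained by the substitution $t=e^{-2\lambda_k s}$, and monotonicity of the $s$-integral with respect to taking a finite maximum, give
\begin{align*}
\int_0^\infty\left[\left(\sum_{k=m}^n e^{-2\lambda_k s}u_k^2\right)\wedge 1\right]\ud s \ge \max_{m\le k\le n}\frac{\log^+|u_k|}{\lambda_k}.
\end{align*}
Integrating against $\mu$ delivers (i) from \eqref{condition2_levy_measure_inv_iff} and (ii) from \eqref{condition_levy_measure_inv_iff} immediately.

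For the converse (i)$+$(ii) $\Rightarrow$ \eqref{condition2_levy_measure_inv_iff}$+$\eqref{condition_levy_measure_inv_iff}, I would pass through Lemma~\ref{le.levy_ext_eqv}, which identifies these conditions with the extension of $\xi_\infty$ to a L\'evy measure on $\Borel(V)$. The key quantitative step is to split the $s$-integral at the threshold $M_{m,n}(u):=\max_{m\le k\le n}\log^+|u_k|/\lambda_k$: for $s\le M_{m,n}(u)$ estimate the integrand by $1$, and for $s\ge M_{m,n}(u)$ use the pointwise inequality $e^{-2\lambda_k s}u_k^2\le u_k^2\wedge 1$ (valid for $k\in\{m,\ldots,n\}$ because $\lambda_k M_{m,n}(u)\ge\log^+|u_k|$) to bound
\begin{align*}
\int_{M_{m,n}(u)}^\infty\sum_{k=m}^n e^{-2\lambda_k s}u_k^2\,\ud s \le \sum_{k=m}^n\frac{u_k^2\wedge 1}{2\lambda_k}.
\end{align*}
Thus the inner $s$-integral is dominated by $M_{m,n}(u)+\sum_{k=m}^n(u_k^2\wedge 1)/(2\lambda_k)$; integrated against $\mu$, the first summand is controlled by (i) and (ii), and the second vanishes as $m\to\infty$ by the tail of \eqref{exp.condition}.

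The main obstacle is the $\mu$-integrability of the residual series $\sum_{k=m}^n(u_k^2\wedge 1)/\lambda_k$: since $\mu$ is only a cylindrical L\'evy measure, $\int_U(u_k^2\wedge 1)\,\mu(\ud u)$ is finite for each $k$ but need not be uniformly bounded, so the naive pointwise estimate above cannot be integrated termwise. I expect to overcome this by constructing the L\'evy measure extension of $\xi_\infty$ directly, mimicking the equicontinuity-at-the-origin and relative-compactness argument in the proof of Lemma~\ref{le.levy_ext_eqv}: condition (ii) plays the role of the uniform tail control needed for Prokhorov tightness in $\mathcal{M}(V)$, so that the extended measure automatically satisfies $\int_V(\|v\|^2\wedge 1)\,\xi_\infty(\ud v)<\infty$, which in turn supplies the missing uniform control over the residual series via the change of variables $\chi_\infty(s,u)=T(s)u$.
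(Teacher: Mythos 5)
Your overall architecture is the paper's: exponential stability plus Theorem~\ref{th.stable_case} reduces (a) to weak convergence of $(\nu_t)_{t\ge0}$, and Theorem~\ref{condiff} reduces that to its four conditions. Your treatment of the L\'evy-measure conditions is essentially sound. The lower bound $\int_0^\infty\bigl[\bigl(\sum_{k=m}^n e^{-2\lambda_k s}\scapro{u}{e_k}^2\bigr)\wedge 1\bigr]\ud s\ge\max_{m\le k\le n}\lambda_k^{-1}\log^+|\scapro{u}{e_k}|$ is exactly the content of the paper's layer-cake computation in the direction (a)$\Rightarrow$(b), and your split of the $s$-integral at $M_{m,n}(u)$ is a clean variant of the paper's three-term decomposition $I^1_{m,n}+I^2_{m,n}+I^3_{m,n}$ over the sets $C_k(s)$ and $B_{m,n}$. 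However, the ``main obstacle'' you flag is not an obstacle, and your proposed workaround for it does not work. Lemma~3.1 of \cite{OU} gives $K_1:=\sup_{\|u^*\|\le 1}\int_U(\scapro{u}{u^*}^2\wedge 1)\,\mu(\udd u)<\infty$ for \emph{any} cylindrical L\'evy measure, so your residual series integrates to at most $\tfrac{K_1}{2}\sum_{k= m}^n\lambda_k^{-1}$, which is finite and tends to $0$ as $m\to\infty$ by \eqref{exp.condition}; this closes \eqref{condition2_levy_measure_inv_iff} and \eqref{condition_levy_measure_inv_iff}. Your alternative --- rerunning the tightness construction of Lemma~\ref{le.levy_ext_eqv} directly from (i), (ii) --- is circular: that construction takes \eqref{condition2_levy_measure_inv_iff} and \eqref{condition_levy_measure_inv_iff} as hypotheses, which is precisely what you are trying to establish.

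The genuine gap is the drift condition \eqref{condition_cinfty_exist}, which you dispose of with a one-line appeal to the classical exponentially-stable argument of \cite{michalik}. That argument relies on the drift being linear in $v$, which fails here: for a cylindrical L\'evy process the characteristic $a$ is merely a continuous map with $a(0)=0$, so $a(T^*(s)v)$ carries no automatic decay from $\|T^*(s)v\|\le e^{-\lambda_1 s}\|v\|$. The paper must invoke the scaling identity \eqref{decomp_drift_a} from \cite{OU}, whose correction term is an integral against $\mu$ over a symmetric difference of balls; controlling it requires the full machinery of the sets $C_k(s)$ again, the bound $K_1$, condition \eqref{logcond_sup_compact_semigroup} and \eqref{exp.condition} (the estimates $I_4$ through $I_7$), together with a Fatou argument over the projections $\pi_n$. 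Moreover, \eqref{condition_cinfty_exist} demands convergence of $c_t$ in the norm of $V$, not merely of $\scapro{c_t}{v}$ for each $v$; the paper obtains this by proving relative compactness of $(c_t)_{t\ge0}$ via $\sup_{t\ge0}\sum_{k\ge m}\scapro{c_t}{e_k}^2\le C_2\sum_{k\ge m}\lambda_k^{-1}$, a step with no counterpart in your sketch. This is where the cylindrical nature of the noise genuinely bites, it constitutes more than half of the paper's proof, and it cannot be delegated to the genuine-L\'evy reference.
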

\begin{proof} (b) $\Rightarrow$ (a).
	We show that the conditions in Theorem \ref{condiff} are satisfied.
The spectral representation of the semigroup \eqref{heat_semigroup}	 implies
	\begin{align*}
	\int_{0}^{\infty}\text{tr}(T(s)QT^*(s))\ud s 
  = \sum_{k=1}^{\infty}\scapro{Qe_k}{e_k}\int_{0}^{\infty}e^{-2\lambda_k s}\ud s 
\leq \frac{1}{2}\|Q\|_{\text{op}} \sum_{k=1}^{\infty}\frac{1}{\lambda_k}, 
	\end{align*} 
which verifies Condition \eqref{finite_trace_condition}  due to our assumption\eqref{exp.condition}.  We next show that \eqref{condition2_levy_measure_inv_iff} and \eqref{condition_levy_measure_inv_iff} are satisfied.
	It follows by Lemma 3.1 in \cite{OU} that for any $c >0$, 
	\begin{align}	\label{sup_levy_cond}
	K_c:= \sup_{\|u^*\|\leq c}\int_U \left(\langle u,u^*\rangle^2 \wedge 1\right) \, \mu(\udd u)< \infty.
	\end{align}
	For $k, m, n \in \N$, $m \leq n$ and $s \geq 0$, we define the following sets
	\begin{align*}
	C_k(s)&:=\left\{u:|\langle u,e_k\rangle| < \exp\left(\frac{\lambda_k s}{2}\right)\right\} \label{de.set_ck}\numberthis\\ 
	B_{m,n} &:=\left\{u:\sum_{k=m}^{n}\langle u,e_k\rangle^2 \leq 1\right\}.
	\end{align*}
	Using the spectral decomposition \eqref{heat_semigroup} of the semigroup, we have
	\begin{align*}
	\int_{0}^{\infty}\int_U & \left( \sum_{k=m}^n \langle u, T^*(s)e_k\rangle^2 \wedge 1 \right) \mu(\udd u)\ud s\\
	&= \int_{0}^{\infty}\int_U\left( \sum_{k=m}^n e^{-2\lambda_k s}\langle u, e_k\rangle^2 \wedge 1 \right) \mu(\udd u)\ud s\\
	& \leq \int_{0}^{\infty}\int_{B_{m,n}}\left( \sum_{k=m}^n e^{-2\lambda_k s}\langle u, e_k\rangle^2  \right)\, \mu(\udd u)\ud s\\
	& \qquad +  \int_{0}^{\infty}\int_{\cap_{j=m}^nC_j(s)\cap B_{m,n}^c}\left( \sum_{k=m}^n e^{-2\lambda_k s}\langle u, e_k\rangle^2  \right) \mu(\udd u)\ud s\\
	& \qquad +  \int_{0}^{\infty}\int_{\cup_{j=m}^nC^c_j(s)\cap 	B_{m,n}^c}\, \mu(\udd u)\ud s\\
	&=: I^1_{m,n}+I^2_{m,n}+I^3_{m,n}. \numberthis \label{ex.heat_maineq}
	\end{align*}
	Since $B_{m,n} \subseteq \cap_{k=m}^n\{u: \langle u,e_k\rangle^2 \leq 1\}$, we have
	\begin{align*}
	I^1_{m,n}&\leq \sum_{k=m}^n\left(\int_{0}^{\infty}e^{-2\lambda_k s}\ud s\right) \int_{\{u:\langle u,e_k\rangle^2 \leq 1\}}  \langle u, e_k\rangle^2 \, \mu(\udd u)\\
	& \leq \sum_{k=m}^n\left(\frac{1}{2\lambda_k}\right)\sup_{\|u^*\|\leq 1}\int_U \left(\langle u, u^*\rangle^2 \wedge 1\right) \, \mu(\udd u)
	 = \frac{K_1}{2} \sum_{k=m}^n\frac{1}{\lambda_k}. \numberthis \label{ex.heat_term1}
	\end{align*}
	Using the definition of the sets $C_k(s)$, we obtain
	\begin{align*}
	I^2_{m,n} &\leq  \int_{0}^{\infty}\int_{\cap_{j=m}^nC_j(s)} \sum_{k=m}^n e^{-\lambda_k s}\left(e^{-\lambda_k s}\langle u, e_k\rangle^2 \wedge 1 \right) \mu(\udd u)\ud s\\
	&\leq \sum_{k=m}^n\left(\int_{0}^{\infty}e^{-\lambda_k s}\ud s\right) \int_U \left(\langle u,e_k\rangle^2 \wedge 1\right) \, \mu(\udd u)\\
	& \leq \sum_{k=m}^n\frac{1}{\lambda_k}\sup_{\|u^*\|\leq 1}\int_U \left(\langle u,u^*\rangle^2 \wedge 1\right) \, \mu(\udd u)
	 = K_1 \sum_{k=m}^n\frac{1}{\lambda_k} \numberthis \label{ex.heat_term2}
	\end{align*}
	Noting that $\cup_{k=m}^{n}C_k^c(s) = \left\{u: \max_{m\leq k\leq n}\left(\frac{2\log^+|\langle u,e_k\rangle|}{\lambda_k} \right) > s\right\}$, we have
	\begin{align*}
	I^3_{m,n} &\leq \int_{0}^{\infty}\int_{\cup_{k=m}^nC^c_k(s)}\, \mu(\udd u)\ud s\\
& = \int_0^{\infty}\mu \left(\left\{u: \max_{m\leq k\leq n}\left(\frac{2\log^+|\langle u,e_k\rangle|}{\lambda_k} \right) > s\right\}\right)\ud s\\
	& = 2\int_U \max_{m\leq k\leq n}\frac{\log^+|\langle u,e_k\rangle|}{\lambda_k}\, \mu(\udd u), \numberthis \label{ex.heat_term3}
	\end{align*}
	where the last equality follows from a Fubini argument;
	see \cite[App.\ 2]{Billingsley} for details. 
	Hence substituting \eqref{ex.heat_term1}, \eqref{ex.heat_term2} and \eqref{ex.heat_term3} in \eqref{ex.heat_maineq} and using \eqref{exp.condition} and \eqref{logcond_compact_semigroup} verifies Condition \eqref{condition_levy_measure_inv_iff}. Similarly, by setting $m=1$ in  \eqref{ex.heat_term1}, \eqref{ex.heat_term2} and \eqref{ex.heat_term3} and  using \eqref{exp.condition} and \eqref{logcond_sup_compact_semigroup} verifies Condition 
	 \eqref{condition2_levy_measure_inv_iff}. 

	
It remains to  prove that \eqref{condition_cinfty_exist} is satisfied, that is there exists $\lim_{t\to \infty} c_t$ where for each $v \in V$,
	\begin{align} \label{cond_1}
	\langle c_t,v\rangle:=\int_0^t a(T^*(s)v)\ud s +\int_V\langle h, v\rangle \big(\1_{B_V}(h)-\1_{B_{\mathbb{R}}}(\langle h, v\rangle)\big)\,\xi_t(\udd h).
	\end{align}
	We first prove that $\int_0^{\infty}|a(T^*(s)v)|\ud s < \infty$ for each $v \in V$. For this we will  use the following equality which holds for all $u^* \in U$ and $\beta >0$ as given by (3.9) in \cite{OU}, 
	\begin{align} \label{decomp_drift_a}
	a(\beta u^*)=\beta a(u^*)+\beta \int_U\langle u,u^*\rangle \big(\1_{B_{\mathbb{R}}}(\beta \langle u,u^*\rangle)-\1_{B_{\mathbb{R}}} ( \langle u,u^*\rangle)\big)\,\mu(\udd u).
	\end{align}
	Let $\pi_{n}:U \to U$ be the projection operator defined by $\pi_{n}(v) := \sum_{k=1}^n\langle v,e_k\rangle e_k$. Then by \eqref{decomp_drift_a} and
	assuming $\|T^*(s)\pi_nv\|\neq 0$ , we obtain
	\begin{align*}
	&|a(T^*(s)\pi_nv)|\\
	&\leq\|T^*(s)\pi_nv\|\left|a\left(\frac{T^*(s)\pi_nv}{\|T^*(s)\pi_nv\|}\right)\right|\numberthis \label{secondterm_in_a} \\
	&\quad + \int_U\left|\left\langle u,T^*(s)\pi_nv \right\rangle\right|\left|\1_{B_{\mathbb{R}}}(\langle u,T^*(s)\pi_nv\rangle)-\1_{B_{\mathbb{R}}} \left( \left \langle u,\frac{T^*(s)\pi_nv}{\|T^*(s)\pi_nv\|}\right\rangle\right)\right|\,\mu(\udd u). 
	\end{align*}
	Since $a$ maps bounded sets to bounded sets, it follows by \eqref{eq.heat_semigp_exp_stable} that
	\begin{align*}
	\int_0^{\infty}\|T^*(s)\pi_nv\|\left|a\left(\frac{T^*(s)\pi_nv}{\|T^*(s)\pi_nv\|}\right)\right|\ud s &\leq \|v\|\sup_{\|u^*\|\leq 1}|a(u^*)|\int_0^{\infty}e^{-\lambda_1 s}\ud s\\
	& =  \frac{\|v\|}{\lambda_1}\sup_{\|u^*\|\leq 1}|a(u^*)|. \numberthis
	\label{estimate_a}
	\end{align*}
Integrating the second term on the right side in \eqref{secondterm_in_a} results in 
	\begin{align*}
	\int_0^{\infty}\int_U&\left|\left\langle u,T^*(s)\pi_nv\right \rangle\right|\left|\1_{B_{\mathbb{R}}}(\langle u,T^*(s)\pi_nv\rangle)-\1_{B_{\mathbb{R}}} \left( \left \langle u,\frac{T^*(s)\pi_nv}{\|T^*(s)\pi_nv\|}\right\rangle\right)\right|\,\mu(\udd u)\ud s\\
	&=\int_0^{\infty}\int_{\left\{u:\left|\langle u,T^*(s)\pi_nv\rangle\right|> 1\right\}\cap \left\{u:\left|\left \langle u,\frac{T^*(s)\pi_nv}{\|T^*(s)\pi_nv\|}\right\rangle\right|\leq 1\right\}}\left|\left\langle u,T^*(s)\pi_nv\right\rangle\right|\,\mu(\udd u)\ud s\\
	&\qquad + \int_0^{\infty}\int_{\left\{u:\left|\langle u,T^*(s)\pi_nv\rangle\right|\leq 1\right\}\cap \left\{u:\left|\left \langle u,\frac{T^*(s)\pi_nv}{\|T^*(s)\pi_nv\|}\right\rangle\right|>1\right\}}\left|\left\langle u,T^*(s)\pi_nv\right\rangle\right|\, \mu(\udd u)\ud s\\
	& =: I_4 +I_5. \numberthis
	\end{align*}
	Using \eqref{eq.heat_semigp_exp_stable} we estimate $I_4$ by
	\begin{align*}
I_4 &\leq \int_0^{\infty}\int_{\left\{u:\left|\left \langle u,\frac{T^*(s)\pi_nv}{\|T^*(s)\pi_nv\|}\right\rangle\right|\leq 1\right\}}\left\langle u,T^*(s)\pi_nv\right\rangle^2\mu(\udd u)\ud s\\
	&\leq  \int_0^{\infty}\|T^*(s)\pi_nv\|^2\int_{U}\left(\left \langle u,\frac{T^*(s)\pi_nv}{\|T^*(s)\pi_nv\|}\right\rangle^2 \wedge 1\right)\,  \mu(\udd u)\ud s\\
	&\leq \|v\|^2 \sup_{\|u^*\|\leq 1}\int_U\big(\langle u, u^*\rangle^2 \wedge 1\big)\, \mu (\udd u) \int_0^{\infty}e^{-2\lambda_1 s} \ud s
	= \frac{\|v\|^2}{2\lambda_1} K_1.  \numberthis
	\end{align*}
	If  $C_k(s)$ denotes the set defined in \eqref{de.set_ck}, we obtain
	\begin{align*}
	I_5 & \leq \int_{0}^{\infty}\int_{\left\{u:\left|\left \langle u,\frac{T^*(s)\pi_nv}{\|T^*(s)\pi_nv\|}\right\rangle\right|>1\right\}\cap (C_1(s) \cap \cdots \cap C_n(s))}\left|\left\langle u,T^*(s)\pi_{n}v\right\rangle\right|\,\mu(\udd u)\ud s\\
	&  \qquad + 	\int_{0}^{\infty}\int_{\left\{u:\left|\left \langle u,\frac{T^*(s)\pi_nv}{\|T^*(s)\pi_nv\|}\right\rangle\right|>1\right\}\cap (C^c_1(s) \cup \cdots \cup C^c_n(s))}\mu(\udd u)\ud s\\
	& \leq 	\int_{0}^{\infty}\int_{\left\{u:\left|\left \langle u,\frac{T^*(s)\pi_nv}{\|T^*(s)\pi_nv\|}\right\rangle\right|>1\right\}\cap (C_1(s) \cap \cdots \cap C_n(s))}\sum_{k=1}^n|\langle u,e_k\rangle||\langle v,e_k\rangle|e^{-\lambda_ks}\,\mu(\udd u)\ud s\\
	& \qquad \qquad + 	\int_{0}^{\infty}\mu\big(C^c_1(s) \cup \cdots \cup C^c_n(s)\big)\ud s\\
	& =: I_6+I_7. \numberthis
	\end{align*}
For the integral $I_6$ we obtain
	\begin{align*}
	I_6 &\leq \sum_{k=1}^n|\langle v,e_k\rangle|\int_{0}^{\infty}e^{-\frac{\lambda_ks}{2}}\int_{\left\{u:\left|\left \langle u,\frac{T^*(s)v}{\|T^*(s)v\|}\right\rangle\right|>1\right\}}\mu(\udd u)\ud s\\
	& \leq 2\|v\|\sup_{\|u^*\|\leq 1}\int_U\left(\langle u, u^*\rangle^2 \wedge 1\right)\, \mu (\udd u) \sum_{k=1}^n\frac{1}{\lambda_k}\\
	& \leq 2\|v\|K_1 \sum_{k=1}^{\infty}\frac{1}{\lambda_k}, \numberthis \label{estimate_I6}
	\end{align*}
	which is finite by using \eqref{exp.condition}.
Using the same equality from \cite[App.\ 2]{Billingsley} as in \eqref{ex.heat_term3}, we obtain 
	\begin{align*}
	I_7 & \leq \int_{0}^{\infty}\mu\left(\cup_{k=1}^{n}\left\{u:|\langle u,e_k\rangle| \ge e^{\frac{\lambda_k s}{2}}\right\}\right)\ud s\\
	& = \int_{0}^{\infty}\mu\left(\left\{u:\max_{1\leq k\leq n}\frac{2}{\lambda_k}\log^+|\langle u,e_k\rangle| \ge s\right\}\right)\ud s\\
	& =2\int_U\max_{1\leq k\leq n}\frac{1}{\lambda_k}\log^+|\langle u,e_k\rangle|\,\mu(\udd u).  \numberthis \label{estimate_I7}
	\end{align*}
Applying \eqref{logcond_sup_compact_semigroup} to \eqref{estimate_I7}
and using \eqref{estimate_a} -- \eqref{estimate_I7}, it follows from 
\eqref{secondterm_in_a} that  there exists some $C_1>0$ such that
	\[\sup_{n\in \N}\int_0^{\infty}|a(T^*(s)\pi_nv)|\ud s \leq C_1(\|v\|+1).\]
	Fatou's Lemma implies that for any $\delta >0$,
	\begin{align*}
	M_{\delta} &:= \sup_{\|v\|<\delta}\int_0^{\infty}|a(T^*(s)v)|\ud s \\
	& \leq \sup_{\|v\|<\delta} \liminf_{n\to\infty}\int_0^{\infty}|a(T^*(s)\pi_n v)|\ud s
	 \leq \sup_{\|v\|<\delta}  C_1(\|v\|+1)
	< \infty. \label{eq.Mdelta} \numberthis
	\end{align*}
This proves that $\int_0^{\infty}a(T^*(s)v)\ud s$ exists, and,  for each $v \in V$, we have
\begin{align}\label{eq.weak-c-t-part1}
\lim_{t\to\infty} \int_0^t a(T^\ast(s)v)\,ds =\int_0^\infty 
a(T^\ast(s)v)\,ds. 
\end{align}
For considering the second term in \eqref{cond_1}, define 
 $f(h,v):= \langle h, v\rangle \left(\1_{B_V}(h)-\1_{B_{\mathbb{R}}}(\langle h, v\rangle)\right)$. Then for any $h, v \in V$,
 \begin{align*}
 |f(h,v)| &=  |\langle h, v\rangle|\1_{B_V}(h)\1_{B^c_{\mathbb{R}}}(\langle h, v\rangle)+|\langle h, v\rangle|\1_{B^c_V}(h)\1_{B_{\mathbb{R}}}(\langle h, v\rangle)\\
 &\le |\langle h, v\rangle|^2\1_{B_V}(h)+\1_{B^c_V}(h),
 \end{align*} 
 from which the  integrability of $f(\cdot, v)$ with respect to $\xi_\infty$ follows by using the properties of L\'evy measures.
Consequently, since $\xi_t(C) \uparrow \xi_{\infty}(C)$ as $ t \to \infty$ for each $C \in \mathcal{B}(V)$
 and $\xi_\infty$ is a L\'evy measure due to Lemma \ref{le.levy_ext_eqv}, we obtain by the same arguments as in Lemma 3.3 in \cite{rockner_fuhrman} that
	\[\lim_{t\rightarrow \infty}\int_Vf(h,v)\,\xi_t(\udd h) =\int_Vf(h,v)\,\xi_{\infty}(\udd h).\]
Together with \eqref{eq.weak-c-t-part1} it follows from \eqref{cond_1} that $(\scapro{c_t}{v})_{t\ge 0}$ converges for each $v\in V$ and 
	\[\lim_{t \to \infty}\scapro{c_t}{v}=\int_0^{\infty}a(T^*(s)v)\ud s +\int_Vf(h,v)\xi_{\infty}(\udd h). \] 
To prove that $(c_t)_{t\ge 0}$ converges in $V$, it is enough to show that $(c_t)_{t\ge 0}$ is relatively compact in $V$, which in this case, as $(c_t)_{t\ge 0}$ is bounded, reduces to establish
	\begin{align}
	\lim_{m\to \infty}\sup_{t\geq 0} \sum_{k=m}^{\infty}\scapro{c_t}{e_k}^2 = 0.
	\end{align}
	Using \eqref{eq.Mdelta}, Cauchy-Schwarz inequality and the fact that $\xi_t \le \xi_{\infty}$, we obtain 
	\begin{align*}
&	\scapro{c_t}{e_k}^2 \\
&= \left(\int_0^t a(T^*(s)e_k)\ud s +\int_V\langle h, e_k\rangle \left(1_{B_V}(h)-1_{B_{\mathbb{R}}}(\langle h, e_k\rangle)\right)\,\xi_t(\udd h)\right)^2\\
	& \leq 2 \left(\int_0^t |a(T^*(s)e_k)|\ud s\right)^2 +2\left(\int_{|\scapro{h}{e_k}|\leq 1 < \|h\|}\langle h, e_k\rangle \,\xi_t(\udd h)\right)^2\\
	&\leq 2 \left(\sup_{\|v\|\leq 1}\int_0^\infty |a(T^*(s)v)|\ud s\right) \int_0^\infty |a(T^*(s)e_k)|\ud s\\
	&\qquad  +2 \xi_t(\|h\|>1)\int_{|\scapro{h}{e_k}|\leq 1 < \|h\|}\langle h, e_k\rangle^2 \,\xi_t(\udd h)\\
	&\leq 2 M_1 \int_0^\infty |a(T^*(s)e_k)|\ud s +2 \xi_{\infty}(\{h:\|h\|>1\})\int_{|\scapro{h}{e_k}|\leq 1 }\langle h, e_k\rangle^2 \,\xi_{\infty}(\udd h). \numberthis \label{ct_compact_term_ek}
	\end{align*}
	It follows by \eqref{decomp_drift_a} and Fubini's theorem that
	\begin{align*}
	\int_0^\infty &|a(T^*(s)e_k)|\ud s\\
	&\le |a(e_k)|\int_0^\infty e^{-\lambda_ks}\ud s + \int_0^\infty e^{-\lambda_k s}\int_{1 \le |\langle u,e_k\rangle| \le e^{\lambda_ks}}|\langle u,e_k\rangle|\, \mu(\udd u)\ud s\\
	& =\frac{|a(e_k)|}{\lambda_k} + \int_0^\infty e^{-\lambda_k s}\int_{1 \le |\beta| \le e^{\lambda_ks}}|\beta|\, \left(\mu \circ \langle \cdot, e_k\rangle^{-1}\right)(\udd \beta)\ud s\\
	&  =\frac{|a(e_k)|}{\lambda_k} + \int_{1 \le |\beta|}|\beta|\int_{\frac{1}{\lambda_k}\log |\beta|}^\infty e^{-\lambda_k s}\ud s \, \left(\mu \circ \langle \cdot, e_k\rangle^{-1}\right)(\udd \beta)\\
	&  =\frac{|a(e_k)|}{\lambda_k} + \frac{1}{\lambda_k}\int_{1 \le |\beta|}\left(\mu \circ \langle \cdot, e_k\rangle^{-1}\right)(\udd \beta)\\
	&  =\frac{|a(e_k)|}{\lambda_k} + \frac{1}{\lambda_k}\mu\left(\{u:|\scapro{u}{e_k}|\ge 1\}\right)\\
	& \leq \frac{1}{\lambda_k} \left(\sup_{ \|u^*\|\le 1}|a(u^*)|+ K_1\right). \numberthis \label{ct_compact_term_a}
	\end{align*}
Similar application of  Fubini's theorem implies for the second term in 
\eqref{ct_compact_term_ek} that
\begin{align*}
&\int_{|\scapro{h}{e_k}|\leq 1}\langle h, e_k\rangle^2 \xi_{\infty}(\udd h)\\
	& =  \int_0^\infty e^{-2\lambda_k s}\int_{|\langle u,e_k\rangle| \le e^{\lambda_ks}}|\langle u,e_k\rangle|^2\,\mu(\udd u)\ud s\\
	& =  \int_0^\infty e^{-2\lambda_k s}\int_{|\langle u,e_k\rangle| \le 1}|\langle u,e_k\rangle|^2\,\mu(\udd u)\ud s\\
&\qquad\qquad 	 +  \int_0^\infty e^{-2\lambda_k s}\int_{1 <|\langle u,e_k\rangle| \le e^{\lambda_ks}}|\langle u,e_k\rangle|^2\,\mu(\udd u)\ud s\\
	& \le  \frac{1}{2\lambda_k}\int_{U}\left(|\langle u,e_k\rangle|^2\wedge 1\right) \mu(\udd u) +  \int_0^\infty e^{-2\lambda_k s}\int_{1 <|\beta| \le e^{\lambda_ks}}|\beta|^2\,\left(\mu \circ \langle \cdot, e_k\rangle^{-1}\right)(\udd \beta)\ud s\\
	& =  \frac{1}{2\lambda_k}K_1+  \int_{1 \le |\beta|}|\beta|^2\int_{\frac{1}{\lambda_k}\log |\beta|}^\infty e^{-2\lambda_k s}\ud s \left(\mu \circ \langle \cdot, e_k\rangle^{-1}\right)(\udd \beta)\\
	& \le  \frac{1}{\lambda_k}K_1.  \numberthis \label{ct_compact_term_xi}
	\end{align*}
	Using the estimates obtained in \eqref{ct_compact_term_a} and \eqref{ct_compact_term_xi} in \eqref{ct_compact_term_ek}, it follows that
there exists a constant $C_2>0$ such that 
	\begin{align*}
	\sup_{t\geq 0}\sum_{k=m}^\infty\scapro{c_t}{e_k}^2 & \leq C_2\sum_{k=m}^\infty\frac{1}{\lambda_k},
	\end{align*}
	which implies that $(c_t)_{t\ge 0}$ is relatively compact in $V$ and hence \eqref{condition_cinfty_exist} is satisfied.
	
	(a) $\Rightarrow$ (b). 
Using the same equality from \cite[App.\ 2]{Billingsley} as in \eqref{ex.heat_term3},	 we have for $m, n \in \N$ with $m \le n$ that
	\begin{align*}
	\int_U \max_{m\leq k\leq n}\frac{\log^+|\langle u,e_k\rangle|}{\lambda_k}\, \mu(\udd u) 
	& = \int_0^{\infty}\mu \left(\left\{u: \max_{m\leq k\leq n}\left(\frac{\log^+|\langle u,e_k\rangle|}{\lambda_k} \right) > s\right\}\right) \ud s\\
	&=\int_0^{\infty} \mu \left(\cup_{k=m}^n\left\{u:e^{- \lambda_ks}|\langle u,e_k\rangle| > 1\right\} \right) \ud s\\
	&\le\int_0^{\infty} \mu \left(\sum_{k=m}^n e^{- 2\lambda_ks}|\langle u,e_k\rangle|^2 > 1 \right) \ud s\\
	& \leq \int_0^{\infty}\int_U\left( \sum_{k=m}^n \langle u, T^*(s)e_k\rangle^2 \wedge 1 \right) \mu(\udd u)\ud s.
	\end{align*}
Since combining Theorem \ref{th.stable_case} with Theorem \ref{condiff} implies 
Conditions \eqref{condition2_levy_measure_inv_iff}	 and \eqref{condition_levy_measure_inv_iff}, the above inequality verifies 
Conditions \eqref{logcond_sup_compact_semigroup} and \eqref{logcond_compact_semigroup}, 
which completes the proof.
\end{proof}
\begin{remark}\label{re.extra_cond_inv.meas}
	From the proof of Theorem \ref{th.heat_semigrp_iff} it also follows that without assuming \eqref{exp.condition}, the following conditions:
	\begin{enumerate}
		\item [(iii)] \itemEq{ \sum_{k=1}^{\infty}\frac{|a(e_k)|}{\lambda_k}< \infty;}
		\item [(iv)] \itemEq{ \sum_{k=1}^{\infty}\frac{\scapro{Qe_k}{e_k}}{\lambda_k}< \infty;}
		\item[(v)] \itemEq{\sum_{k=1}^{\infty}\frac{1}{\lambda_k}\int_U \left( \scapro{u}{e_k}^2 \wedge 1\right) \mu (\udd u) < \infty, \label{eq.levy-measure_spectral.cond}}
	\end{enumerate} together with  \eqref{logcond_sup_compact_semigroup} and \eqref{logcond_compact_semigroup} are  sufficient for the existence of an invariant measure.
\end{remark}

\begin{remark}
Let $L$ be a genuine L\'evy process  with classical characteristics $(b, Q, \mu)$. According to Theorem 1 in \cite{michalik_heat}, if the semigroup satisfies \eqref{eq.conditon-eigenvalues}, \eqref{heat_semigroup} and 
	\begin{align}\label{exp.condition.michalik}
	\sum_{k=1}^{\infty}\frac{e^{-\lambda_kT}}{\lambda_k} < \infty,
	\end{align}
then a necessary and sufficient condition for the existence of a stationary measure for the process \eqref{OUP} is given by
	\begin{align}
	\int_U \sup_{ n\in\N} \left(\frac{\log^+|\langle u,e_n\rangle|}{\lambda_n} \right) \mu(\udd u) < \infty. \label{logcond_classical_levy}
	\end{align}
	In this case, our Conditions  \eqref{logcond_sup_compact_semigroup} and \eqref{logcond_compact_semigroup} are  equivalent to \eqref{logcond_classical_levy}.  Since $\mu$ is a genuine L\'evy measure, the monotone convergence theorem implies
	\[\sup_{n\geq m} \int_U \max_{m\leq k\leq n} \left(\frac{\log^+|\langle u,e_k\rangle|}{\lambda_k} \right) \mu(\udd u) = \int_U \sup_{n\geq m} \left(\frac{\log^+|\langle u,e_n\rangle|}{\lambda_n} \right) \mu(\udd u),\] 
	which shows the equivalence of \eqref{logcond_sup_compact_semigroup} and \eqref{logcond_classical_levy} by taking $m=1$.
Furthermore, Condition \eqref{eq.conditon-eigenvalues} implies for each $u \in U$ that
	\begin{align*}
	\sup_{n\geq m} \frac{\log^+|\langle u,e_n\rangle|}{\lambda_n} \leq \sup_{n\geq m}\frac{\log^+\| u\|}{\lambda_n} \leq \frac{\log^+\| u\|}{\lambda_m} \to 0 \text{    as  } m \to \infty.
	\end{align*}
	An application of Lebesgue's theorem together with \eqref{logcond_classical_levy}  implies \eqref{logcond_compact_semigroup}. 
	
Condition \eqref{exp.condition.michalik} is weaker than \eqref{exp.condition}. But it is well known (and also  mentioned in above Example) that the stochastic heat equation driven by a cylindrical Brownian motion has a weak solution if and only if $d =1$. Therefore, condition \eqref{exp.condition} is more natural for an arbitrary cylindrical L\'evy process. But if $L$ is a genuine L\'evy process or if $L$ has characteristics $(0,0,\mu)$ where $\mu$ is symmetric, then the above proof can be easily modified by assuming \eqref{exp.condition.michalik} instead of \eqref{exp.condition}.
\end{remark}

\begin{example}\label{ex.series-invariant}
The following specific example of a cylindrical L\'evy process is often considered in the literature:
let $(\ell_k)_{k\in \N}$ be a sequence of symmetric, independent, real valued L\'evy  processes with characteristics $(0,0,\mu_k)$, and define
\begin{align}\label{eq.example-L-series}
 L(t)u:=\sum \limits_{k=1}^{\infty}\langle e_k, u\rangle  \ell_k(t)
	\qquad\text{for all }u\in U,\, t\ge 0.
\end{align}
If the sum converges for each $u\in U$ and the family of characteristic functions of $\ell_k$ are equicontinuous in $0$, then \eqref{eq.example-L-series} defines a cylindrical L\'evy process $L$; see \cite[Le.\ 4.2]{OU}. 

Assume that the semigroup $(T(t))_{t\ge 0}$ satisfies  the spectral representation \eqref{heat_semigroup}. Since the independence of the real valued processes $(\ell_k)_{k\in\N}$ implies that the cylindrical L\'evy measure $\mu$  is concentrated on the axes, Conditions \eqref{logcond_sup_compact_semigroup} and  \eqref{logcond_compact_semigroup} reduce to
	\begin{align}\label{eq.expstable-stationary-series}
	\sum_{k=1}^{\infty}\frac{1}{\lambda_k}\int_{\R}\log^+|\beta|\,\mu_k(\udd \beta) < \infty.
	\end{align}
\end{example}

\begin{example} \label{ex.priola}
The authors of \cite{priola_zabczyk_cyl} consider  a cylindrical L\'evy process of the form \eqref{eq.example-L-series} with $\ell_k=\sigma_k m_k$ for all $k\in\N$, where 
$(\sigma_k)_{k\in\N}\subseteq \ell^\infty(\R)$ and $(m_k)_{k\in\N}$ is a sequence of identically distributed, independent, symmetric real valued L\'evy processes without Gaussian part and with L\'evy measure $\rho$.
In this case, Condition \eqref{eq.expstable-stationary-series} is equivalent to 
\begin{align}\label{eq.log-condition-series}
	\sum_{k=1}^{\infty}\frac{1}{\lambda_k}\int_{\R}\log^+|\sigma_k\beta|\,\rho(\udd \beta) < \infty.
\end{align}
It follows from Theorem \ref{th.heat_semigrp_iff} that, if the reciprocal eigenvalues $(1/\lambda_k)_{k\in\N}$ are summable, i.e.\ satisfy \eqref{exp.condition}, then there exists a stationary solution if and only if
\begin{align} \label{log_cond_rho}
\int_1^{\infty}\log \beta\, \rho(\udd \beta) <\infty.
\end{align}
This covers exactly the result in \cite{priola_zabczyk_cyl}. 

However, Theorem \ref{condiff} improves the results from \cite{priola_zabczyk_cyl}: without assuming that  the reciprocal eigenvalues $(1/\lambda_k)_{k\in\N}$ are summable, there exists a stationary measure if and only if Conditions \eqref{condition2_levy_measure_inv_iff} and \eqref{condition_levy_measure_inv_iff}	are satisfied which in this case is equivalent to
\begin{align}\label{eq.iff-condition-series}
\sum_{k=1}^\infty \int_0^\infty \int_{\R} \left(e^{-2 \lambda_k s}|\sigma_k \beta|^2 
\wedge 1\right)\, \rho(\udd \beta)\, \udd s <\infty.
\end{align}
Furthermore, Condition \eqref{eq.iff-condition-series} is satisfied if and only if Condition \eqref{eq.log-condition-series} is true and
\begin{align}
\sum_{k=1}^\infty	\frac{1}{\lambda_k}\int_{\R} \left(|\sigma_k \beta|^2 \wedge 1 \right)\rho(\udd \beta) &< \infty. \label{eq.iff-series.condition1}
\end{align}
The last claim follows from the following calculation: 
\begin{align*}
\int_0^\infty &\int_{\R} \left(e^{-2 \lambda_k s}|\sigma_k \beta|^2 
\wedge 1\right)\, \rho(\udd \beta)\, \udd s\\ 
& = \int_{\R} \int_0^\infty \left(e^{-2 \lambda_k s}|\sigma_k \beta|^2 
\wedge 1\right)\, \udd s \, \rho(\udd \beta)\\
& = \int_{|\sigma_k\beta| \le 1} \int_0^\infty e^{-2 \lambda_k s}|\sigma_k \beta|^2 
\, \udd s \, \rho(\udd \beta) +\int_{|\sigma_k\beta| > 1} \int_0^\infty \left(e^{-2 \lambda_k s}|\sigma_k \beta|^2 
\wedge 1\right)\, \udd s \, \rho(\udd \beta)\\
& = \frac{1}{2\lambda_k}\int_{|\sigma_k\beta| \le 1} |\sigma_k \beta|^2 \rho(\udd \beta)+\frac{1}{\lambda_k}\int_{|\sigma_k\beta| > 1} \log |\sigma_k\beta| \, \rho(\udd \beta) +\frac{1}{2\lambda_k}\int_{|\sigma_k\beta| > 1} \, \rho(\udd \beta)\\
& = \frac{1}{2\lambda_k}\int_{\R} \left(|\sigma_k \beta|^2 \wedge 1 \right)\rho(\udd \beta) +\frac{1}{\lambda_k}\int_{\R} \log^+|\sigma_k\beta| \, \rho(\udd \beta).
\end{align*}

For example, if each $m_k$ is chosen as a symmetric, $\alpha$-stable process with L\'evy measure $\rho(\udd \beta) = \frac{1}{2}|\beta|^{-1-\alpha} \udd \beta$, then a simple calculation shows that \eqref{eq.iff-condition-series} is satisfied if and only if 
\begin{align*}
\sum_{k=1}^\infty \frac{|\sigma_k|^\alpha}{\lambda_k}<\infty,
\end{align*}
which is a weaker condition than assuming 
that the reciprocal eigenvalues are summable. 
\end{example}

\bibliographystyle{plain}

\end{document}